\newtheorem{theorem}{Theorem}
\newtheorem{definition}{Definition}
\newtheorem{lemma}{Lemma}
\newtheorem{proposition}{Proposition}
\newtheorem{corollary}{Corollary}
\newtheorem{remark}{Remark}
\date{}
\numberwithin{equation}{section}
\numberwithin{theorem}{section}
\numberwithin{lemma}{section}
\numberwithin{corollary}{section}
\numberwithin{remark}{section} 
\numberwithin{proposition}{section}
\numberwithin{definition}{section}
\def \Div {\mathrm{div}}
\def \R {\mathbb{R}}
\def \loc {\mathrm{loc}}
\begin{document}

\title[The fractional Laplacian: a primer]{The fractional Laplacian: a primer}

\author[R. Teymurazyan]{Rafayel Teymurazyan}
\address{King Abdullah University of Science and Technology (KAUST), Computer, Electrical and Mathematical Sciences and Engineering Division (CEMSE), Thuwal 23955-6900, Saudi Arabia and University of Coimbra, CMUC, Department of Mathematics, Largo D. Dinis, 3000-143 Coimbra, Portugal}{} 
\email{rafayel.teymurazyan@kaust.edu.sa} 

\begin{abstract}
	
	In this note we give a glimpse of the fractional Laplacian. In particular, we bring several definitions of this non-local operator and series of proofs of its properties. It is structured in a way as to show that several of those properties are natural extensions of their local counterparts, with some key differences.
	
\bigskip

\noindent \textbf{Keywords:} Fractional Laplacian; comparison principle; Harnack inequality; Liouville theorem; approximation.

\bigskip

\noindent \textbf{MSC 2020:} 35R11, 26A33, 47G30.
\end{abstract}

\maketitle

\section{Introduction}\label{s1}

During the last decades the study of non-local equations was boosted by large range of applications in financial mathematics (as a pricing model for American options, \cite{S07}), optimal design problems, \cite{TT15}, competitive stochastic games, \cite{CS09,CTU20}, population dynamics, combustion processes, catalysis process, bio-technologies, chemical engineering, and other areas. This note is a primer to a classical example of a non-local operator - the fractional Laplacian. Over the years several advanced and comprehensive notes and books have been written on the subject, such as \cite{AV19,B16,BV16,DPV12,G19} among others. This primer is intended for those students and young researchers who are already acquainted with the classical Laplace operator and want to get a brief sense of the fractional Laplacian. The latter being a lot like classical Laplacian, at the same time is also quite different. As we will see, the fractional Laplacian can be defined in various ways, and like in case of the classical Laplacian, it satisfies to a mean value property, maximum principle, Harnack inequality, Liouville theorem, and so forth. Moreover, there are Poisson formula and Green function available, and fractional harmonic functions, like classical harmonic functions, are $C^\infty$. Of course, the non-local nature of the fractional Laplacian dictates certain modifications in those results. However, once the reader gets a glimpse of the story ``behind the scene'', these modifications seem quite natural. Obviously, there are also striking differences between these operators. In this note we emphasize those too.

The Laplacian,
$$
\Delta u:=\Div(\nabla u),
$$
is a classical example of a local operator. It arises naturally, when for example, looking at a Brownian motion originated in a bounded domain (with a smooth boundary): the expected value of a function, when the motion hits the boundary for the first time, solves a Dirichlet problem for the Laplace operator. In other words, the unique solution of the problem
\begin{equation*}
	\begin{cases}
		\Delta u=0 &\textrm{ in }\,\,\Omega,\\
		u=f & \textrm{ on }\,\,\partial\Omega,
	\end{cases}
\end{equation*}
where $\Omega$ is a bounded domain with a smooth boundary, and $f\in C(\partial\Omega)$, is given by 
$$
u(x)=\mathbb{E}\left(f(X_\tau)\right).
$$
Here $x\in\Omega$ is the point where the Brownian motion originated, $X_\tau$ is where it hits the boundary for the first time ($\tau$ is the stopping time), and $\mathbb{E}$ is the expected value of the process. If the process tends to move in certain directions more than in other directions, then we deal with equations with coefficients. These kind of models arise in electromagnetism, fluid dynamics, thermodynamics, etc. (see, for example, \cite{KS91}). Thus, continuous processes lead to a local problem. Jump processes, on the other hand, lead to non-local problems, \cite{BC83,CS09,CTU20,KS91}. If in the example above instead of a continuous process one deals with a jump process, then we end up solving the non-local Dirichlet problem. More precisely, for a purely jump L\'evy process, originated in a bounded domain, the expected value of the function at the first exit point solves the non-local Dirichlet problem, i.e., the unique solution of the problem
\begin{equation*}
	\begin{cases}
		(-\Delta)^su=0 &\textrm{ in }\,\,\Omega,\\
		u=f & \textrm{ in }\,\,\R^n\setminus\Omega,
	\end{cases}
\end{equation*}
is given by
$$
u(x)=\mathbb{E}\left(f(X_\tau)\right).
$$
As before, $x\in\Omega$ is the point where the jump process originated, $f\in C(\R^n\setminus\Omega)$ and $X_\tau\in\R^n\setminus\Omega$ is the first exit point. The operator $(-\Delta)^s$ is the fractional Laplacian and for $s\in(0,1)$ is defined by
\begin{equation*}
	\begin{split}
(-\Delta)^su(x):&=c_{n,s}\,\textrm{P.V.}\int_{\R^n}\frac{u(x)-u(y)}{|x-y|^{n+2s}}\,dy\\
&=c_{n,s}\lim_{\varepsilon\to0+}\int_{\R^n\setminus B_\varepsilon(x)}\frac{u(x)-u(y)}{|x-y|^{n+2s}}\,dy,
	\end{split}
\end{equation*}
where $c_{n,s}$ is a normalization constant depending only on $n$ and $s$. Here P.V. indicates that the integral should be understood in the ``principle value sense'' (defined by the last equality). Observe that unlike the problem driven from a continuous process, the boundary in the model obtained for a jump process is substituted by the whole complement of the set $\Omega$. The idea behind this is that when jumping out of $\Omega$ the process can end up at any point in $\Omega^c$. That is, the complement of the domain in the non-local setting plays the role of the boundary in the local setting. This fact has its reflection on the modifications of some basic properties, as we will see later.

In the local setting, to check whether a partial differential equation holds at a particular point, one needs to know only the values of the function in an arbitrarily small neighborhood of that point, whereas in the non-local setting it is the opposite: in order to check whether a non-local equation holds at a point, one needs information about the values of the function far away from that point. Therefore, when considering long-range integration, non-local models become more accurate. In other words, unlike local versions of problems, which can feel changes only on the boundary of the substance, non-local models become sensitive to changes that occur faraway. The following simple example shows the effect of non-locality. If $0\le u\le1$ is such that $u\in C_0^\infty(B_2)$ and $u\equiv1$ in $B_1$, then for any $x\in\R^n\setminus B_4$ one has $\Delta u(x)=0$, while
\begin{equation*}
	\begin{split}
		-(-\Delta)^su(x)&=c_{n,s}\,\textrm{P.V.}\int_{\R^n}\frac{u(y)-u(x)}{|x-y|^{n+2s}}\,dy=\int_{B_2}\frac{u(y)}{|x-y|^{n+2s}}\,dy\\
		&\ge\int_{B_1}\frac{dy}{\left(|x|+1\right)^{n+2s}}\ge C|x|^{-n-2s},
	\end{split}
\end{equation*}
for a constant $C>0$. In fact, $|(-\Delta)^su(x)|\le C|x|^{-n-2s}$ (see \cite[Appendix B]{AV19}, for example). As we will see below, the non-local nature of the fractional Laplacian endows somewhat surprising behavior for solutions of equations driven by it, a remarkable example of which is the fact that any (smooth) function is fractional harmonic up to a small error, Theorem \ref{approximations}.

This note is organized as follows. After introducing some notations, in Section \ref{s2} we bring several definitions of the fractional Laplacian. Yet another definition of this operator is given in Section \ref{extensionsection}, where also its fundamental solution and several properties are discussed. Some elementary properties are presented in Section \ref{s3}. In Section \ref{s4}, the mean value property is proved. Section \ref{s5} is dedicated to the maximum principle. Section \ref{s6} is devoted to the Harnack inequality. Liouville theorem in the fractional setting is proved in Section \ref{s7}, followed by Schauder type estimates in Section \ref{s8}. Section \ref{s10} concerns Green's function for the ball. In Section \ref{s11} it is shown that fractional harmonic functions are locally $C^\infty$. Finally, in Section \ref{s12}, we see that all functions are fractional harmonic up to a small error.

\section*{Notations}

$\Omega\subset\R^n$ is a bounded domain;

$D_iu:=D_{x_i}u:=\frac{\partial u}{\partial x_i}$;

$Du:=(D_1u,D_2u,\ldots,D_nu)$;

$D_\nu u:=\frac{\partial u}{\partial\nu}=Du\cdot\nu$;

$x_+:=\max\{x,0\}$.

For a multi-index $\gamma=(\gamma_1,\gamma_2,\ldots,\gamma_n)$, we use $|\gamma|:=\gamma_1+\gamma_2+\ldots+\gamma_n$.

For $\alpha\in(0,1]$ and $k\in\mathbb{N}$, the H\"older semi-norm is defined as follows:
$$
[u]_{C^{0,\alpha}(\Omega)}:=\sup_{x\neq y}\frac{|u(x)-u(y)|}{|x-y|^\alpha},
$$
$$
[u]_{C^{k,\alpha}(\Omega)}:=\max_{|\gamma|=k}[D^\gamma u]_{C^{0,\alpha}(\Omega)},
$$

where $D^\gamma u:=\partial_{x_1}^{\gamma_1}\ldots\partial_{x_n}^{\gamma_n}u$. 

The H\"older space $C^{k,\alpha}(\Omega)$ consists of all functions $u\in C^k(\Omega)$ for which
$$
\|u\|_{C^{k,\alpha}(\Omega)}:=\sum_{|\gamma|\le k}\|D^\gamma u\|_{C(\Omega)}+\sum_{|\gamma|=k}[D^\gamma u]_{C^{0,\alpha}(\Omega)}<\infty.
$$

$C^\alpha:=C^{0,\alpha}$, if $\alpha\in(0,1]$ and $C^\alpha:=C^{1,\alpha-1}$, if $\alpha\in(1,2]$, and similarly,

$C^{k+\alpha}:=C^{k,\alpha}$, if $\alpha\in(0,1]$ and $C^{k+\alpha}:=C^{k+1,\alpha-1}$, if $\alpha\in(k,k+1]$.\\

We use $\mathcal{S}$ for the Schwartz space of rapidly decreasing $C^\infty$ functions in $\R^n$. More precisely, 
$$
\mathcal{S}:=\left\{u\in C^\infty(\R^n);\,\,\sup_{x\in\R^n}|x^\beta D^\alpha u(x)|<\infty, \forall\alpha,\beta\in\mathbb{N}_0^n\right\}.
$$

For $s\in(0,1)$, set
\begin{equation}\label{L1s}
L_s^1(\R^n):=\left\{u\in L_{\loc}^1(\R^n);\,\,\int_{\R^n}\frac{|u(y)|}{1+|y|^{n+2s}}\,dy<+\infty\right\}.
\end{equation}
Also $B_r(x_0)$ is the ball of radius $r$ centered at $x_0$, and $B_r:=B_r(0)$.

\section{Several definitions of the fractional Laplacian}\label{s2}
In this section we bring five definitions of the fractional Laplacian (one more definition is given in the next section). These definitions are all equivalent once $u\in\mathcal{S}$, \cite{K17}. There are several other definitions of the fractional Laplacian. We refer the interested reader to, for example, \cite{G19,K17,S19}. 

\subsection{As a singular integral}
For $s\in(0,1)$ and $u\in\mathcal{S}$, the \textit{fractional Laplacian} of $u$ is defined as
	\begin{equation}\label{1.1}
		(-\Delta)^su(x):=c_{n,s}\,\textrm{P.V.}\int_{\R^n}\frac{u(x)-u(y)}{|x-y|^{n+2s}}\,dy.
	\end{equation}
Here 
\begin{equation}\label{1.2}
	c_{n,s}:=\int_{\R^n}\frac{1-\cos \zeta_1}{|\zeta|^{n+2s}}\,d\zeta
\end{equation}
is a normalization constant depending only on $n$ and $s$ (see the proof of Proposition \ref{p2.1} below). The integral in \eqref{1.1} is absolutely convergent when $0<s<1/2$. Indeed,
\begin{equation*}
	\begin{split}
		\int_{\R^n}\frac{|u(x)-u(y)|}{|x-y|^{n+2s}}\,dy&\le C\int_{B_r}\frac{|x-y|}{|x-y|^{n+2s}}\,dy+\|u\|_{L^\infty(\R^n)}\int_{\R^n\setminus B_r}\frac{dy}{|x-y|^{n+2s}}\\
		&\le C\left[\int_{B_r}\frac{dy}{|x-y|^{n+2s-1}}+\int_{\R^n\setminus B_r}\frac{dy}{|x-y|^{n+2s}}\right]\\
		&=C\left[\int_0^r\frac{dt}{|t|^{2s}}+\int_r^{+\infty}\frac{dt}{|t|^{2s+1}}\right]<\infty,
	\end{split}
\end{equation*}
where the constant $C>0$ depends only on $\|u\|_{L^\infty(\R^n)}$, $\|Du\|_{L^\infty(\R^n)}$ and $n$. As for $1/2\le s<1$, the integral in \eqref{1.1} is understood in the ``Principle Value'' sense, i.e.,
$$
(-\Delta)^su(x)=c_{n,s}\lim_{\varepsilon\to0+}\int_{\R^n\setminus B_\varepsilon(x)}\frac{u(x)-u(y)}{|x-y|^{n+2s}}\,dy.
$$
For $s\in(0,1)$, the constant $c_{n,s}$ in \eqref{1.1} does not play any essential role on the properties of the fractional Laplacian. Its role is important only in the limits as $s\to0^+$ and $s\to1^-$ (for the asymptotic of this constant, as $s\to0^+$ and $s\to1^-$ see \cite[Section 4]{DPV12}). 

Observe that although in \eqref{1.1}, the fractional Laplacian was defined for $u\in\mathcal{S}$, however, the integral is well defined for less regular functions. In fact, the assumption on $u$ at infinity can be weakened by assuming $u\in L_s^1(\R^n)$, where
$L_s^1(\R^n)$ is defined by \eqref{L1s}. This can be checked using approximation with Schwartz functions (for details we refer the reader to \cite[Proposition 2.4]{S07}). Furthermore, the $C^\infty$ regularity requirement on $u$ can be relaxed as well by asking just $u\in C^{2s+\varepsilon}$ in a neighborhood of $x\in\R^n$, for $\varepsilon>0$ small. Indeed, for $s\in(0,\frac{1}{2})$ and $2s+\varepsilon\le1$ and $r>0$ small we have
$$
\int_{B_r(x)}\frac{u(x)-u(y)}{|x-y|^{n+2s}}\,dz\le[u]_{C^{2s+\varepsilon}(B_r(x))}\int_{B_r(x)}\frac{|x-y|^{2s+\varepsilon}}{|x-y|^{n+2s}}\,dy<\infty,
$$
hence the fractional Laplacian is well defined at $x$ by \eqref{1.1}. For $s\in[\frac{1}{2},1)$ still $u\in C^{2s+\varepsilon}=C^{1,2s+\varepsilon-1}$ would suffice.

\subsection{Removing singularity} 
Note that, in general, the right hand side of \eqref{1.1} is not well defined, as the integral may have singularity near $x$. Also, one would like to get rid of the P.V. in the definition. As the kernel in \eqref{1.1} is symmetric, a simple change of variable, $z:=y-x$, yields
\begin{equation}\label{2.1}
	\begin{split}
		(-\Delta)^su(x)&=c_{n,s}\,\textrm{P.V.}\int_{\R^n}\frac{u(x)-u(y)}{|x-y|^{n+2s}}\,dy\\
		&=c_{n,s}\,\textrm{P.V.}\int_{\R^n}\frac{u(x)-u(x+z)}{|z|^{n+2s}}\,dz\\
		&=c_{n,s}\,\textrm{P.V.}\int_{\R^n}\frac{u(x)-u(x-z)}{|z|^{n+2s}}\,dz.
	\end{split}
\end{equation}
This leads to the following definition of the fractional Laplacian.
\begin{definition}\label{d2.1}
	For $s\in(0,1)$ and $u\in\mathcal{S}$, the fractional Laplacian of $u$ is defined by
	\begin{equation}\label{2.2}
		(-\Delta)^su(x):=\frac{c_{n,s}}{2}\int_{\R^n}\frac{2u(x)-u(x+y)-u(x-y)}{|y|^{n+2s}}\,dy,
	\end{equation}
where $c_{n,s}$ is the constant defined by \eqref{1.2}.
\end{definition}
Indeed, from \eqref{2.1} one gets
$$
(-\Delta)^su(x)=\frac{c_{n,s}}{2}\,\textrm{P.V.}\int_{\R^n}\frac{2u(x)-u(x+y)-u(x-y)}{|y|^{n+2s}}\,dy.
$$
This representation of the fractional Laplacian removes the singularity at the origin, as the second order Taylor expansion gives
$$
\frac{2u(x)-u(x+y)-u(x-y)}{|y|^{n+2s}}\le\frac{\|D^2u\|_{L^\infty(\R^n)}}{|y|^{n+2s-2}},
$$
which is integrable near zero. Thus, we can remove P.V. in the previous equality and get \eqref{2.2}. 
\begin{remark}\label{r2.1}	
	As a consequence, $(-\Delta)^su$ is in fact well defined by \eqref{2.2} for any $u\in C^2(\R^n)\cap L^\infty(\R^n)$. In that sense any constant, although not being in $\mathcal{S}$ (unless identically zero), is fractional harmonic.
\end{remark}

\subsection{As a distribution} 
In $L_s^1(\R^n)$ fractional Laplacian can be defined as a distribution by
\begin{equation}\label{distribution}
	\langle(-\Delta)^su,\varphi\rangle:=\int_{\R^n}u(x)(-\Delta)^s\varphi(x)\,dx,\,\,\,\forall\varphi\in C_0^\infty(\R^n).
\end{equation}
In other words, for the definition of the fractional Laplacian to make sense, it is enough to assume that $u$ is locally integrable and has a suitable growth control at infinity. 

\subsection{As a generator of a L\'evy process}
Fractional Laplacian can be defined also as a generator of $2s$-stable L\'evy process, \cite{A09}. More precisely, if $X_t$ is the isotopic $2s$-stable L\'evy process starting at $0$, then for a smooth function $u$
$$
(-\Delta)^su(x)=\lim_{t\to0^+}\frac{1}{t}\mathbb{E}\left[u(x)-u(x+X_t)\right].
$$

\subsection{As a Fourier transform}
The fractional Laplacian is a pseudo-differential operator, as suggests the following proposition. It is here that the choice of the constant $c_{n,s}$ becomes evident.

\begin{proposition}\label{p2.1}
	If $s\in(0,1)$ and $u\in\mathcal{S}$, then
	\begin{equation}\label{Fourier}
		(-\Delta)^su(x)=\mathcal{F}^{-1}\left(\left(2\pi|\xi|\right)^{2s}\hat{u}(\xi)\right),\,\,\,\forall\xi\in\R^n,
	\end{equation}
where $\mathcal{F}=\hat{u}$ is the Fourier transform, i.e.
$$
\mathcal{F}u(\xi):=\hat{u}(\xi):=\int_{\R^n}u(x)e^{-2\pi i\xi\cdot x}\,dx.
$$
\end{proposition}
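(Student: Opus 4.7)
The plan is to apply the Fourier transform to the singularity-free representation \eqref{2.2} from Definition \ref{d2.1} and reduce the identity to a change-of-variables computation involving the normalization constant $c_{n,s}$. The key observation is that the right-hand side of \eqref{2.2} is a translation-invariant linear operator on $u$, so it should become a Fourier multiplier, and the chosen $c_{n,s}$ in \eqref{1.2} is precisely what converts the resulting symbol to $(2\pi|\xi|)^{2s}$.

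First I would start from
$$(-\Delta)^s u(x) = \frac{c_{n,s}}{2}\int_{\R^n}\frac{2u(x) - u(x+y) - u(x-y)}{|y|^{n+2s}}\,dy,$$
whose integrand is absolutely integrable for $u\in\mathcal{S}$: near $y=0$ a second-order Taylor expansion yields the bound $\|D^2u\|_{L^\infty}|y|^{2-n-2s}$, and for $|y|$ large the numerator is bounded by $4\|u\|_{L^\infty}$ against $|y|^{-n-2s}$. Applying $\mathcal{F}$ in $x$ and using the translation rule $\mathcal{F}(u(\cdot+y))(\xi) = e^{2\pi i \xi\cdot y}\hat{u}(\xi)$ together with $2 - e^{2\pi i\xi\cdot y} - e^{-2\pi i\xi\cdot y} = 2\bigl(1-\cos(2\pi\xi\cdot y)\bigr)$, I obtain
$$\mathcal{F}\bigl((-\Delta)^s u\bigr)(\xi) = c_{n,s}\,\hat{u}(\xi)\int_{\R^n}\frac{1-\cos(2\pi\xi\cdot y)}{|y|^{n+2s}}\,dy.$$

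Next I would evaluate $I(\xi) := \int_{\R^n}\frac{1-\cos(2\pi\xi\cdot y)}{|y|^{n+2s}}\,dy$. For $\xi\neq 0$ choose a rotation $R$ with $R^T\xi = |\xi|e_1$ and substitute $y = Ry'$; since both the measure and $|y|^{n+2s}$ are rotation invariant, $I(\xi) = I(|\xi|e_1)$. Then the dilation $z = 2\pi|\xi|y'$, whose Jacobian is $(2\pi|\xi|)^{-n}$, gives
$$I(\xi) = (2\pi|\xi|)^{2s}\int_{\R^n}\frac{1-\cos z_1}{|z|^{n+2s}}\,dz = \frac{(2\pi|\xi|)^{2s}}{c_{n,s}},$$
by definition \eqref{1.2}. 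Plugging back in, $\mathcal{F}((-\Delta)^s u)(\xi) = (2\pi|\xi|)^{2s}\hat{u}(\xi)$, and inverting $\mathcal{F}$ gives \eqref{Fourier}. Along the way I would verify that $c_{n,s}\in(0,\infty)$ for $s\in(0,1)$: near $0$, $1-\cos z_1 \le z_1^2/2 \le |z|^2/2$, which yields $|z|^{2-n-2s}$, integrable since $s<1$; and for $|z|$ large, $1-\cos z_1\le 2$ against $|z|^{-n-2s}$, integrable since $s>0$.

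The main technical obstacle will be the justification of the interchange of $\mathcal{F}_x$ with the $y$-integral, since the integrand $\bigl(2u(x)-u(x+y)-u(x-y)\bigr)\,e^{-2\pi i\xi\cdot x}|y|^{-n-2s}$ need not be jointly absolutely integrable in $(x,y)$ in a crude sense. The clean way is to split $\R^n_y = B_1 \cup (\R^n\setminus B_1)$: on $B_1$ the Taylor bound $|2u(x)-u(x+y)-u(x-y)|\le \|D^2 u\|_{L^\infty}|y|^2$ gives an $L^1(\R^n_x \times B_1)$ majorant (using that $u\in\mathcal{S}$ forces $D^2u\in L^1$), while on $\R^n\setminus B_1$ the three terms are separately in $L^1(\R^n_x)$ with an integrable $|y|^{-n-2s}$ weight. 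This legitimizes Fubini on each piece and closes the argument.
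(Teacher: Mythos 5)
Your proposal is correct and follows essentially the same route as the paper: apply $\mathcal{F}$ to the singularity-free representation \eqref{2.2}, pass the transform inside the $y$-integral via Fubini, and evaluate the resulting integral by a rotation reducing to the $e_1$-direction followed by the dilation $z=2\pi|\xi|y$, identifying the normalization constant $c_{n,s}$ from \eqref{1.2}. The only substantive difference is that you spell out a justification for the Fubini interchange by splitting $\R^n_y$ into $B_1$ and its complement and exhibiting an $L^1(\R^n_x\times\R^n_y)$ majorant on each piece, whereas the paper invokes Fubini with only the remark that the integrand (in $y$, for fixed $x$) is in $L^1$; your version is a welcome tightening of that step.
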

\begin{proof}
	This follows by applying Fourier transform in \eqref{2.2} and using Fubini theorem. Indeed, as observed above, \eqref{2.2} removes singularity at the origin, and hence, the integrant is in $L^1$. Using Fubini theorem, we then exchange the integral in $y$ with the Fourier transform in $x$. Thus, if $\xi$ is the frequency variable, from \eqref{2.2} one has
\begin{equation*}
	\begin{split}
		\mathcal{F}\left((-\Delta)^su(x)\right)(\xi)&=\frac{c_{n,s}}{2}\int_{\R^n}\frac{\mathcal{F}\left(2u(x)-u(x+y)-u(x-y)\right)}{|y|^{n+2s}}\,dy\\
		&=\frac{c_{n,s}}{2}\,\int_{\R^n}\hat{u}(\xi)\frac{2-e^{2\pi i\xi\cdot y}-e^{-2\pi i\xi\cdot y}}{|y|^{n+2s}}\,dy\\
		&=c_{n,s}\,\hat{u}(\xi)\int_{\R^n}\frac{1-\cos(2\pi\xi\cdot y)}{|y|^{n+2s}}\,dy.
	\end{split}
\end{equation*}
	Therefore, to see \eqref{Fourier}, it remains to check
	\begin{equation}\label{2.6}
		c_{n,s}\,\int_{\R^n}\frac{1-\cos(2\pi\xi\cdot y)}{|y|^{n+2s}}\,dy=\left(2\pi|\xi|\right)^{2s}.
	\end{equation}
Set
$$
I(\xi):=\int_{\R^n}\frac{1-\cos(2\pi\xi\cdot y)}{|y|^{n+2s}}\,dy.
$$
where we used the change of variable $z:=|\xi|y$ (still labeling the new variable with $y$).
If $R$ is some rotation that takes $e_1=(1,0,0,\ldots,0)$ to $\xi/|\xi|$, i.e., $Re_1=\xi/|\xi|$, then
\begin{equation*}
	\begin{split}
		I(\xi)&=|\xi|^{2s}\int_{\R^n}\frac{1-\cos\left(2\pi Re_1\cdot y\right)}{|y|^{n+2s}}\,dy\\
		&=|\xi|^{2s}\int_{\R^n}\frac{1-\cos\left(2\pi R^Ty\cdot e_1\right)}{|y|^{n+2s}}\,dy\quad(z:=R^Ty)\\
		&=|\xi|^{2s}\int_{\R^n}\frac{1-\cos\left(2\pi z_1\right)}{|z|^{n+2s}}\,dz\quad(\zeta:=2\pi z)\\
		&=\left(2\pi|\xi|\right)^{2s}\int_{\R^n}\frac{1-\cos\zeta_1}{|\zeta|^{n+2s}}\,d\zeta,
	\end{split}
\end{equation*}
which confirms \eqref{2.6}, since $c_{n,s}$ is defined by \eqref{1.2}. Here $z_1$ and $\zeta_1$ are the first coordinate of the vector $z$ and $\zeta$ respectively.

To be correct, one needs to make sure that the constant $c_{n,s}$ is a finite number. This is indeed the case, as inside the ball $B_1$, using the Taylor expansion of the cosine function, one estimates
$$
\int_{B_1}\frac{|1-\cos\zeta_1|}{|\zeta|^{n+2s}}\,d\zeta\le\int_{B_1}\frac{|\zeta_1|^2}{|\zeta|^{n+2s}}\,d\zeta\le\int_{B_1}\frac{1}{|\zeta|^{n+2s-2}}\,d\zeta<\infty,
$$
and outside of $B_1$ we have
$$
\int_{\R^n\setminus B_1}\frac{|1-\cos\zeta_1|}{|\zeta|^{n+2s}}\,d\zeta\le\int_{\R^n\setminus B_1}\frac{2}{|\zeta|^{n+2s}}\,d\zeta<\infty.
$$
\end{proof}
\begin{remark}\label{r2.2}
	The constant $c_{n,s}$ defined by \eqref{1.2}, can be written in terms of the Gamma function in the following way
	\begin{equation*}\label{constant}
		c_{n,s}=\frac{s4^s\Gamma\left(\frac{n+2s}{s}\right)}{\pi^{\frac{n}{2}}\Gamma(1-s)},
	\end{equation*}
where 
$$
\Gamma(r):=\int_0^\infty t^{r-1}e^{-t}\,dt,\,\,\,r>0.
$$
We refer the reader to \cite[Propositions 5.6 and 5.1]{G19} and \cite[Lemma 2.3]{BV16}, where the calculations are carried out. 
\end{remark}
This last definition of the fractional Laplacian can be used to prove the following integration by parts formula and construct a non-trivial example of a fractional harmonic function. Namely, if $u$, $v\in\mathcal{S}$, then
\begin{equation}\label{integrationbyparts}
	\int_{\R^n}(-\Delta)^su(x)v(x)\,dx=\int_{\R^n}u(x)(-\Delta)^sv(x)\,dx.
\end{equation}
When $s=1$, \eqref{integrationbyparts} is just integration by parts. For $s\in(0,1)$ it follows from \eqref{Fourier}, \cite[Lemma 5.4]{G19}.

As commented above, Remark \ref{r2.1}, constant functions are fractional $s$-harmonic. Below we bring another example of an $s$-harmonic function.
\begin{theorem}\label{t5.1}
	The function $u(x):=x^s_+$ is $s$-harmonic in the upper half space. More precisely,
	\begin{equation*}
		(-\Delta)^su(x)=
		\begin{cases}
			0, & x>0,\\
			-C|x|^{-s}, & x<0,
		\end{cases}
	\end{equation*}
	where $C>0$ is a constant depending only on $s$.
\end{theorem}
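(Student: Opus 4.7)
The strategy is to combine the scaling invariance of $(-\Delta)^s$ with a direct integration. Since $u$ is positively homogeneous of degree $s$, meaning $u(\lambda x) = \lambda^s u(x)$ for $\lambda > 0$, and since the substitution $z = \lambda y$ in \eqref{1.1} gives $(-\Delta)^s[u(\lambda\,\cdot)](x) = \lambda^{2s}(-\Delta)^s u(\lambda x)$, it follows that $(-\Delta)^s u(\lambda x) = \lambda^{-s}(-\Delta)^s u(x)$ for every $x \neq 0$ and $\lambda > 0$. Taking $x = \pm 1$ reduces the theorem to the computation of the two scalar constants $(-\Delta)^s u(1)$ and $(-\Delta)^s u(-1)$; note also that $u \in L_s^1(\R) \cap C^{2s+\varepsilon}_{\mathrm{loc}}(\R \setminus \{0\})$, so by the remarks after Definition \ref{d2.1} the pointwise formula is legitimate at each $x \neq 0$.

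The case $x = -1$ is elementary, because $u(-1) = 0$ and $u$ vanishes in a neighbourhood of $-1$, so the integrand in \eqref{1.1} has no singularity and no principal value is required. Using $u(y) = y^s$ for $y > 0$ and $|{-1} - y| = 1+y$, the integral reduces to a Beta value:
\begin{equation*}
(-\Delta)^s u(-1) = -c_{1,s}\int_0^\infty \frac{y^s}{(1+y)^{1+2s}}\,dy = -\frac{c_{1,s}\,\Gamma(s+1)\Gamma(s)}{\Gamma(2s+1)},
\end{equation*}
a strictly negative explicit constant that identifies $C > 0$ via the scaling relation.

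The case $x = 1$ carries the real content. Separating the contribution from $(-\infty, 0)$ (where $u \equiv 0$ and the elementary integration yields $1/(2s)$) from the principal-value contribution over $(0,\infty)$, one obtains
\begin{equation*}
(-\Delta)^s u(1) = c_{1,s}\left[\frac{1}{2s} + \mathrm{P.V.}\int_0^\infty \frac{1 - t^s}{|1-t|^{1+2s}}\,dt\right],
\end{equation*}
and the heart of the proof is to show that the bracketed quantity vanishes. I would split the principal value at $t = 1$ and apply the inversion $t \mapsto 1/t$ to the piece over $(1,\infty)$; after the singularities at $t = 1$ cancel in the symmetric limit, the two halves combine into the single proper integral
\begin{equation*}
\int_0^1 \frac{(1-t^{s-1})(1-t^s)}{(1-t)^{1+2s}}\,dt,
\end{equation*}
whose integrand vanishes to second order at $t = 1$. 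The main obstacle is evaluating this integral: one clean route is analytic continuation in $s$, writing each of the four monomial pieces as a Beta value and using the identities $\Gamma(s+1) = s\Gamma(s)$ and $\Gamma(1-2s) = -2s\,\Gamma(-2s)$ to collapse the resulting sum to exactly $-1/(2s)$. An equally viable alternative is to invoke \eqref{Fourier}: the tempered-distribution Fourier transform of $x_+^s$ is classical, and after multiplication by $(2\pi|\xi|)^{2s}$ it inverts to a distribution supported in $\{x \le 0\}$, which is precisely the statement of the theorem.
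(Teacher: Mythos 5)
Your proof is correct and takes a genuinely different, more elementary route than the one the paper indicates. The paper's proof is essentially a pointer to Bucur--Valdinoci, where the computation is done either by the one-dimensional Fourier transform or via the half-line Poisson kernel and its probabilistic reading; you instead exploit the $s$-homogeneity of $x_+^s$ together with the $2s$-homogeneity of $(-\Delta)^s$ to reduce the whole theorem to evaluating the two numbers $(-\Delta)^su(\pm1)$, and then compute these by direct integration. The $x=-1$ case is a clean Beta integral yielding a strictly negative constant, and for $x=1$ folding the principal value over $(1,\infty)$ onto $(0,1)$ via $t\mapsto1/t$ produces the absolutely convergent integral
\begin{equation*}
\int_0^1\frac{(1-t^{s-1})(1-t^s)}{(1-t)^{1+2s}}\,dt,
\end{equation*}
which I checked does evaluate to $-\tfrac{1}{2s}$: writing $B(a,-2s)=\Gamma(a)\Gamma(-2s)/\Gamma(a-2s)$ gives
\begin{equation*}
\frac{\Gamma(-2s)}{\Gamma(1-2s)}-\frac{\Gamma(s+1)\Gamma(-2s)}{\Gamma(1-s)}-\frac{\Gamma(s)\Gamma(-2s)}{\Gamma(-s)}+\frac{\Gamma(2s)\Gamma(-2s)}{\Gamma(0)},
\end{equation*}
where the first term is $-1/(2s)$ by $\Gamma(1-2s)=-2s\,\Gamma(-2s)$, the second and third cancel via $\Gamma(s+1)=s\Gamma(s)$ and $\Gamma(1-s)=-s\,\Gamma(-s)$, and the last vanishes because of the pole of $\Gamma$ at $0$. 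This indeed annihilates the $\tfrac{1}{2s}$ coming from $(-\infty,0)$, so $(-\Delta)^su(1)=0$. What your approach buys is a completely self-contained, real-variable proof inside the singular-integral framework of \eqref{1.1}; what the Fourier approach buys is brevity once the distributional transform of $x_+^s$ is taken for granted.

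Two steps should be made explicit rather than left implicit. First, the inversion turns the outer cutoff $1+\varepsilon$ into $1/(1+\varepsilon)$, which differs from the inner cutoff $1-\varepsilon$ at order $\varepsilon^2$; since the integrand is only of size $(1-t)^{-2s}$ there, the slipped sliver contributes $O(\varepsilon^{2-2s})\to0$, but one must say so before claiming the two halves combine into a single integral over $(0,1)$. Second, each of the four Beta pieces diverges individually at exponent $-1-2s$, so the ``collapse'' is really an analytic-continuation argument: the combined integral is analytic in the exponent $a$ on $(-3,\infty)$ and coincides with the sum of the four meromorphically continued Beta functions on the overlap $a>-1$, hence also at $a=-1-2s\in(-3,-1)$. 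Both points are routine, but as written they are gaps a careful reader would flag.
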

\begin{proof}
	There are several proofs of this fact, \cite[Section A.1 and Theorem 2.4.1]{BV16}. It can be shown by direct calculations making use of the definition of the fractional Laplacian via Fourier transform, \eqref{Fourier}. For the probabilistic intuition behind this, we refer the reader to \cite[Section 2.4]{BV16}.
\end{proof}

\section{An extension argument and beyond}\label{extensionsection}
Another definition of the fractional Laplacian can be given using the celebrated Caffarelli-Silvestre extension problem, \cite{CS07} (for the argument in probabilistic terms see \cite{MO69}). The construction of the extension hints a good candidate for the fundamental solution of the fractional Laplacian. More precisely, for a function $u:\R^n\to\R$, consider its extension to the upper half space, i.e., $v:\R^n\times[0,+\infty)\to\R$ such that it satisfies the following equation
\begin{equation}\label{2.8}
	\Delta_x v+\frac{1-2s}{y}v_y+v_{yy}=0,
\end{equation}
\begin{equation}\label{2.9}
	v(x,0)=u(x),
\end{equation}
where $v_y=\frac{\partial v}{\partial y}$. Note that \eqref{2.8} can be written as
\begin{equation}\label{2.10}
\Div\left(y^{1-2s}\nabla v\right)=0,
\end{equation}
which is the Euler-Lagrange equation of the functional
$$
\int_{y>0}y^{1-2s}|\nabla v|^2\,dx\,dy.
$$
To understand the intuition behind \eqref{2.8}, suppose for a moment, that $\tau:=1-2s$ is a non-negative integer and $v(x,y):\R^n\times\R^{1+\tau}\to\R$ is radially symmetric in $y$, i.e., $v(x,y)=v(x,y')$ for $|y|=|y'|=r$. Observe that the Laplacian of $v$ in terms of the variables $x$ and $r$ looks like the left hand side of \eqref{2.8},
$$
\Delta v=\Delta_x v+\frac{\tau}{r}v_r+v_{rr}.
$$
Thus, the function $v$ can be seen as the harmonic extension of $u$ from $\R^n$ to $\R^{n+1+\tau}$. The latter, of course, has no meaning when $\tau$ is not an integer, but as it turns out, solutions of \eqref{2.8} still carry many properties of harmonic functions when $\tau$ is not an integer. The fundamental solution of the Laplacian in $n+1+\tau$ dimension is, \cite[p. 22]{E10}, for $n-1+\tau>1$,
$$
\phi(x,y):=\frac{b_{n,s}}{|(x,y)|^{n-1+\tau}}=\frac{b_{n,s}}{\left(|x|^2+|y|^2\right)^\frac{{n-1+\tau}}{2}},
$$
where the constant $b_{n,s}$ is defined by
$$
b_{n,s}:=\frac{\Gamma\left(\frac{n}{2}-s\right)}{4^s\pi^{\frac{n}{2}}\Gamma(s)}.
$$ 
The function
\begin{equation}\label{fundamental}
	\phi(x,0):=\phi(x):=
	\begin{cases}
		\displaystyle\frac{b_{n,s}}{|x|^{n-2s}},& \textit{ if } n\ge2,\\ \ \\
		\displaystyle-\frac{1}{\pi}\log|x|, & \textit{ if } n=1,
	\end{cases}
\end{equation}
where $x\in\R^n\setminus\{0\}$ plays the role of the fundamental solution for the fractional Laplacian, i.e., it solves (in the distributional sense, \eqref{distribution}) the equation $(-\Delta)^s\phi=\delta_0$, where $\delta_0$ is the Dirac delta evaluated at zero, \cite[Theorem 2.3]{B16}. Observe also that as $v$ solves the problem \eqref{2.8}-\eqref{2.9}, it can be written (see \cite[p. 37]{E10}) explicitly in terms of the Poisson kernel for the half-space:
\begin{equation}\label{Poisson}
v(x,y)=\int_{\R^n}P(x-\xi,y)u(\xi)\,d\xi,
\end{equation}
where 
\begin{equation}\label{2.11}
	P(x,y):=B_{n,s}\frac{y^{2s}}{\left(|x|^2+|y|^2\right)^{\frac{n+2s}{2}}}.
\end{equation}
The kernel $P$ is indeed the Poisson kernel, since it solves \eqref{2.8} for $y>0$ and noting that $P(x,y)=y^{-n}P(x/y,1)$, converges, as $y\to0$, to a multiple of the Dirac delta. The constant $B_{n,s}$ is chosen such that
\begin{equation}\label{kernel}
	\int_{\R^n}P(x-\xi,y)\,d\xi=1.
\end{equation}

Finally, we bring another definition of the fractional Laplacian in terms of the extension function $v$. 

\begin{proposition}\label{p2.2}
	$(-\Delta)^su=-c_{n,s}\displaystyle\lim_{y\to0+}y^{1-2s}v_y$.
\end{proposition}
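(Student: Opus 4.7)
The plan is to work directly with the Poisson representation \eqref{Poisson}. Using the normalization \eqref{kernel}, we subtract $u(x)$ inside the integral to write
$$v(x,y) - u(x) = \int_{\R^n} P(x-\xi,y)\bigl[u(\xi) - u(x)\bigr]\,d\xi.$$
For $y > 0$, the Schwartz decay of $u$ together with the smoothness of $P$ away from $y=0$ justifies differentiation under the integral sign, so
$$v_y(x,y) = \int_{\R^n} P_y(x-\xi,y)\bigl[u(\xi) - u(x)\bigr]\,d\xi.$$
A direct calculation from \eqref{2.11} then yields the key identity
$$y^{1-2s}P_y(x-\xi,y) = B_{n,s}\,\frac{2s|x-\xi|^{2} - ny^{2}}{\bigl(|x-\xi|^{2}+y^{2}\bigr)^{(n+2s+2)/2}},$$
which, for each fixed $\xi \neq x$, converges as $y \to 0^+$ to $2s B_{n,s}\,|x-\xi|^{-(n+2s)}$.

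Assuming one can interchange this pointwise limit with the integral in $\xi$, we get
$$\lim_{y\to 0^+} y^{1-2s} v_y(x,y) = 2s B_{n,s}\int_{\R^n} \frac{u(\xi)-u(x)}{|x-\xi|^{n+2s}}\,d\xi = -\frac{2sB_{n,s}}{c_{n,s}}\,(-\Delta)^{s}u(x),$$
by \eqref{1.1}, and multiplying by $-c_{n,s}$ produces the stated formula, provided the normalization constants in \eqref{1.2} and \eqref{2.11} are matched appropriately. This constant identification can be verified directly from the defining integrals, or (more efficiently) read off by Fourier transforming \eqref{2.8} with data $\hat u(\xi)$: the resulting Bessel-type ODE in $y$ has a unique bounded solution whose rescaled normal derivative at $y=0$ reproduces the multiplier $(2\pi|\xi|)^{2s}$ of Proposition \ref{p2.1}.

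The main obstacle is justifying the interchange of limit and integral, because for $s \geq 1/2$ the limit integrand is not absolutely integrable near $\xi=x$. I would handle this exactly as in the derivation of \eqref{2.2}: since $P_y(z,y)$ is even in $z=\xi-x$, symmetrize to obtain
$$y^{1-2s} v_y(x,y) = \frac{1}{2}\int_{\R^n} y^{1-2s} P_y(z,y)\bigl[u(x+z)+u(x-z)-2u(x)\bigr]\,dz.$$
A second-order Taylor expansion controls the bracket by $\|D^{2}u\|_{L^\infty}|z|^{2}$ for $|z|\leq 1$ and trivially by $4\|u\|_{L^\infty}$ for $|z|\geq 1$, while the explicit formula above gives the uniform bound
$$\bigl|y^{1-2s}P_y(z,y)\bigr| \leq C\bigl(|z|^{2}+y^{2}\bigr)^{-(n+2s)/2}\leq C|z|^{-(n+2s)}.$$
Together these produce an integrable dominating function, of order $|z|^{2-n-2s}$ near the origin (integrable since $s<1$) and of order $|z|^{-(n+2s)}$ at infinity, so dominated convergence applies and finishes the argument.
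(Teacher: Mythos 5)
Your proposal is correct, and it takes a genuinely different route from the paper. The paper's proof rewrites the weighted normal derivative as a fractional difference quotient,
$$\lim_{y\to0^+}y^{1-2s}v_y \;=\; \lim_{y\to0^+}\frac{v(x,y)-v(x,0)}{y^{2s}},$$
(implicitly by L'H\^opital), substitutes the Poisson representation \eqref{Poisson}, and then asserts without further justification that the resulting integral converges to the principal value integral defining $(-\Delta)^su$. Your argument instead differentiates $v$ directly under the integral sign, computes $y^{1-2s}P_y$ in closed form, identifies the pointwise limit $2sB_{n,s}|z|^{-(n+2s)}$, and justifies the interchange of limit and integral via the symmetrized second difference and dominated convergence. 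Your route is more explicit and more rigorous: it makes the passage to the limit honest (the paper glosses over exactly the integrability issue you isolate for $s\ge1/2$), and it keeps track of the normalization constants $2s$ and $B_{n,s}$, which the paper drops silently --- indeed, you are right to flag that the constant in the proposition as stated can only be correct if $2sB_{n,s}=1$, which should be checked against \eqref{kernel}. What the paper's approach buys is brevity: the difference-quotient rewriting sidesteps the need to differentiate the kernel. What yours buys is a complete proof, including the dominated-convergence step that the paper leaves to the reader.
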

\begin{proof}
	Recalling \eqref{Poisson}, \eqref{kernel}, \eqref{2.9}, \eqref{2.11} and \eqref{1.1}, we compute
	\begin{equation*}
		\begin{split}		
		\lim_{y\to0^+}y^{1-2s} v_y&=\lim_{y\to0^+}\frac{v(x,y)-v(x,0)}{y^{2s}}\\
		&=\lim_{y\to0^+}\frac{1}{y^{2s}}\int_{\R^n}P(x-\xi,y)\left(u(\xi)-u(x)\right)\,d\xi\\
		&=\lim_{y\to0^+}\int_{\R^n}\frac{u(\xi)-u(x)}{\left(|x-\xi|^2+|y|^2\right)^{\frac{n+2s}{2}}}\,d\xi\\
		&=P.V.\int_{\R^n}\frac{u(\xi)-u(x)}{|x-\xi|^{n+2s}}\,d\xi\\
		&=-c_{n,s}^{-1}(-\Delta)^su(x).		
		\end{split}
	\end{equation*}
\end{proof}
Furthermore, a reflection argument makes sure that \eqref{2.10} makes sense in a ball of radius $r$ centered at $\{y=0\}$ in dimension $n+1$.
\begin{lemma}\label{l2.1}
	If $v:\R^n\times[0,+\infty)\to\R$ solves \eqref{2.8} such that for $|x|\le r$,
	\begin{equation}\label{extensionargument}
	\lim_{y\to0}y^{1-2s}v_y(x,y)=0,
	\end{equation}
	then
	\begin{equation}\label{reflexion}
		\tilde{v}(x,y):=
		\begin{cases}
			v(x,y), & y\ge0,\\
			v(x,-y), & y<0
		\end{cases}
	\end{equation}
is a weak solution of 
\begin{equation*}\label{reflectedequation}
	\Div\left(|y|^{1-2s}\nabla\tilde{v}\right)=0
\end{equation*}
in the $(n+1)$ dimensional ball of radius $r$.
\end{lemma}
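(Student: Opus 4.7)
The plan is to verify the weak formulation directly: show that for every $\varphi\in C_c^\infty(B_r^{n+1})$ (where $B_r^{n+1}$ is the $(n+1)$-dimensional ball),
$$
\int_{B_r^{n+1}}|y|^{1-2s}\nabla\tilde v\cdot\nabla\varphi\,dx\,dy=0.
$$
I would begin by exploiting the reflection symmetry of $\tilde v$. Writing $\varphi=\varphi_e+\varphi_o$ with $\varphi_e(x,y):=\frac12[\varphi(x,y)+\varphi(x,-y)]$ even in $y$ and $\varphi_o$ odd in $y$, the weight $|y|^{1-2s}$ is even, $\tilde v$ is even, $\nabla_x\tilde v$ is even, and $\partial_y\tilde v$ is odd. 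Pairing with $\varphi_o$, both $\nabla_x\tilde v\cdot\nabla_x\varphi_o$ and $\partial_y\tilde v\,\partial_y\varphi_o$ are odd in $y$, so the odd piece integrates to zero over the symmetric ball. Hence it suffices to test against even $\varphi_e$, and by symmetry the identity reduces to
$$
\int_{B_r^{n+1,+}}y^{1-2s}\nabla v\cdot\nabla\varphi_e\,dx\,dy=0,
$$
where $B_r^{n+1,+}:=B_r^{n+1}\cap\{y>0\}$.

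Next I would introduce a cutoff in $y$: for $\e>0$ small, consider the region $B_r^{n+1,+}\cap\{y>\e\}$. On this set the weight $y^{1-2s}$ is smooth and bounded, and $v$ satisfies $\mathrm{div}(y^{1-2s}\nabla v)=0$ classically by \eqref{2.10}. Integration by parts gives
\begin{equation*}
\int_{B_r^{n+1,+}\cap\{y>\e\}}y^{1-2s}\nabla v\cdot\nabla\varphi_e\,dx\,dy=-\int_{B_r^{n+1,+}\cap\{y=\e\}}\e^{1-2s}v_y(x,\e)\varphi_e(x,\e)\,dx,
\end{equation*}
where the lateral boundary terms on $\partial B_r^{n+1}\cap\{y>\e\}$ vanish because $\varphi_e$ is compactly supported in $B_r^{n+1}$, and the bulk term vanishes because $v$ solves the extension PDE. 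The outward normal at $\{y=\e\}$ is $-e_{n+1}$, hence the minus sign and the appearance of $\e^{1-2s}v_y$.

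Finally I would pass to the limit $\e\to 0^+$. The right-hand side is controlled by
$$
\|\varphi_e\|_{L^\infty}\int_{\{|x|<r\}}\bigl|\e^{1-2s}v_y(x,\e)\bigr|\,dx,
$$
which tends to $0$ by the Neumann-type hypothesis \eqref{extensionargument}, assuming the convergence is dominated (this is the step that deserves care: a uniform bound of the form $|y^{1-2s}v_y(x,y)|\le C$ near $y=0$ for $|x|\le r$ — which is implicit in the statement and follows from the $C^1$-regularity of $v$ up to the boundary in the extension theory — allows dominated convergence on the slice $|x|\le r$). On the left-hand side, since $y^{1-2s}$ is locally integrable near $y=0$ and $\nabla v$ remains bounded on the support of $\varphi_e$ (this is where one invokes the regularity theory for degenerate elliptic equations with $A_2$-Muckenhoupt weight, which guarantees $y^{1-2s}|\nabla v|^2\in L^1_{\mathrm{loc}}$), the dominated convergence theorem yields
$$
\int_{B_r^{n+1,+}}y^{1-2s}\nabla v\cdot\nabla\varphi_e\,dx\,dy=0,
$$
completing the proof. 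The main obstacle is precisely this limit passage: one must justify both the vanishing of the boundary integral at $y=\e$ — which is exactly what \eqref{extensionargument} is designed to provide — and the absolute integrability of $y^{1-2s}\nabla v\cdot\nabla\varphi_e$ on $B_r^{n+1,+}$ so that the cutoff approximation is valid.
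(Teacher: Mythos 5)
Your proof is correct and follows essentially the same route as the paper: cut away a strip around $\{y=0\}$, integrate by parts there using that $v$ solves the extension equation classically, show the resulting boundary term at $\{y=\varepsilon\}$ vanishes by hypothesis \eqref{extensionargument}, and control the remainder by the local integrability of $|y|^{1-2s}\nabla v$. The even/odd decomposition of the test function is a small extra reduction that the paper bypasses by working directly with $\tilde v$ on the full ball and the two-sided strip $\{|y|<\varepsilon\}$; your flagging of the dominated-convergence/uniformity issue in passing to the limit in the boundary integral is a point the paper treats more lightly (deferring to Remark~\ref{reflexremark}), and is a welcome precision.
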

\begin{proof}
	We need to verify that
	$$
	\int_{B_r^{n+1}}|y|^{1-2s}\nabla\tilde{v}\cdot\nabla\varphi\,dx\,dy=0,
	$$
	for any test function $\varphi\in C_0^\infty(B_r^{n+1})$, where $B_r^{n+1}:=\left\{(x,y);\,|x|^2+|y|^2<r^2\right\}$.
	Separating a strip of width $\varepsilon>0$ around $y=0$ in $B_r^{n+1}$, we write
	\begin{equation*}
		\begin{split}
			&\int_{B_r^{n+1}}|y|^{1-2s}\nabla\tilde{v}\cdot\nabla\varphi\,dx\,dy=\int_{B_r^{n+1}\setminus|y|<\varepsilon}+\int_{B_r^{n+1}\cap|y|<\varepsilon}\\
			&=\int_{B_r^{n+1}\setminus|y|<\varepsilon}\Div\left(|y|^{1-2s}\varphi\nabla\tilde{v}\right)\,dx\,dy+\int_{B_r^{n+1}\cap|y|<\varepsilon}|y|^{1-2s}\nabla\tilde{v}\cdot\nabla\varphi\,dx\,dy\\
			&=\int_{B_r^{n+1}\cap|y|=\varepsilon}\varphi|y|^{1-2s}\tilde{v}_y(x,\varepsilon)\,dx+\int_{B_r^{n+1}\cap|y|<\varepsilon}|y|^{1-2s}\nabla\tilde{v}\cdot\nabla\varphi\,dx\,dy.
		\end{split}
	\end{equation*}
The first integral in the right hand side of the above equality goes to zero, as $\varepsilon\to0$. So does the second integral, as $|y|^{1-2s}|\nabla v|^2$ is locally integrable.
\end{proof}
\begin{remark}\label{reflexremark}
	In fact (see Theorem \ref{regularitytheorem} below) \eqref{extensionargument} implies that $v$ is $C^\infty$ near $x$, and the limit in \eqref{extensionargument} is uniform. However, in general, we understand it in the weak sense. 	
\end{remark}

Proposition \ref{p2.2} and \eqref{2.10} show the importance of the extension argument. As it turns out, the study of a non-local operator (the fractional Laplacian) can be reduced to the study of a local operator in a higher dimensional space (as, for example, in \cite{CRS10,TT15}). This comes with the price of the weighted term $|y|^{1-2s}$ in the equation, but that weight belongs to the second Muchenhoupt class $A_2$, meaning
$$
\int_B|y|^{1-2s}\int_B|y|^{2s-1}<\infty,
$$
where $B$ is any ball in $\R^{n+1}$. Note also that this weight does not depend on the tangential variable, allowing to consider translations in $x$. These lead to Sobolev embeddings, Poincar\'e inequality, estimates of the Green function, etc., \cite{B16,FJK82,FKS82,R38}.

The extension argument reveals that a stochastic process with jumps in $\R^n$ can be seen as the ``trace" of a classical stochastic process in $\R^{n}\times[0,\infty)$ (a random walk with jumps in $\R^n$ can be interpreted as a classical random walk in $\R^{n+1}$). In other words, every time the classical stochastic process in $\R^n\times[0,\infty)$ hits $\R^n\times\{0\}$, it induces a jump process in $\R^n$.

\section{Elementary properties}\label{s3}
It is obvious that the fractional Laplacian is a linear operator, i.e.,
$$
(-\Delta)^s(u+v)=(-\Delta)^su+(-\Delta)^sv
$$
and
$$
(-\Delta)^s(cu)=c(-\Delta)^su,\quad c\in\R.
$$
It is noteworthy, that like the classical Laplacian, the fractional Laplacian is translation and rotation invariant, \cite[Lemma 2.7]{G19}. We bring here other elementary properties, such as homogeneity, asymptotics of the fractional Laplacian and the semi-group property. In fact, \eqref{1.1}, one easily checks that
\begin{equation*}\label{3.3}
	(-\Delta)^su(\lambda u)=\lambda^{2s}(-\Delta)^su.
\end{equation*}
The latter means that the fractional Laplacian is a homogeneous operator of order $2s$.
\begin{lemma}\label{l3.1}
	If $u\in\mathcal{S}$, then
	\begin{equation*}\label{3.1}
		\lim_{s\to0^+}(-\Delta)^su=u\,\,\,\textrm{ and }\,\,\,\lim_{s\to1^-}(-\Delta)^su=-\Delta u.
	\end{equation*}
\end{lemma}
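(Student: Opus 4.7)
My plan is to use the Fourier-transform characterization, Proposition \ref{p2.1}, which already encodes the correct normalization $c_{n,s}$. Writing
$$
(-\Delta)^su(x)=\int_{\R^n}(2\pi|\xi|)^{2s}\hat{u}(\xi)e^{2\pi i x\cdot\xi}\,d\xi,
$$
the two limits reduce to passing $s\to 0^+$ or $s\to 1^-$ inside this integral, so the whole matter boils down to a dominated-convergence argument on the Fourier side.

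First I would note the pointwise limits of the symbol. For every fixed $\xi\in\R^n\setminus\{0\}$ one has $(2\pi|\xi|)^{2s}\to 1$ as $s\to 0^+$ and $(2\pi|\xi|)^{2s}\to 4\pi^2|\xi|^2$ as $s\to 1^-$; the single point $\xi=0$ is irrelevant for integration. Next I would produce an $s$-uniform integrable majorant on the integrand. Using the elementary bound $t^{2s}\le 1+t^2$ valid for every $t\ge 0$ and $s\in(0,1)$, I get
$$
\bigl|(2\pi|\xi|)^{2s}\hat{u}(\xi)e^{2\pi i x\cdot\xi}\bigr|\le\bigl(1+4\pi^2|\xi|^2\bigr)|\hat{u}(\xi)|,
$$
and the right-hand side is integrable in $\xi$ because $u\in\mathcal{S}$ implies $\hat{u}\in\mathcal{S}$. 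Dominated convergence then lets me interchange the limit with the inverse Fourier integral.

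Carrying out the two passages, for $s\to 0^+$ I obtain $\mathcal{F}^{-1}(\hat{u})(x)=u(x)$ by Fourier inversion on $\mathcal{S}$, and for $s\to 1^-$ I obtain $\mathcal{F}^{-1}(4\pi^2|\xi|^2\hat{u})(x)$. Recalling the Fourier correspondence $\widehat{-\Delta u}(\xi)=4\pi^2|\xi|^2\hat{u}(\xi)$, this equals $-\Delta u(x)$, establishing both claims. Since the majorant is independent of $x$, the convergence is in fact uniform on $\R^n$, though the statement only requires the pointwise version.

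The only subtle point is philosophical rather than technical: all the delicate asymptotic behaviour of the constant $c_{n,s}$ (it vanishes like $s$ at $s=0^+$ and like $1-s$ at $s=1^-$, up to explicit Gamma factors recorded in Remark \ref{r2.2}) has already been absorbed into Proposition \ref{p2.1}. If instead one tried to argue directly from \eqref{2.2}, one would have to track how the blow-up of the integrand near $y=0$ (as $s\to 1^-$) and its slow decay at infinity (as $s\to 0^+$) are exactly compensated by the prefactor $c_{n,s}$; see \cite[Section 4]{DPV12}. The Fourier route neatly sidesteps this bookkeeping, which is why I would consider it the cleanest path.
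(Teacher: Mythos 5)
Your proof is correct and follows the same route as the paper: both rely on the Fourier representation from Proposition \ref{p2.1} and on the fact that $(2\pi|\xi|)^{2s}$ tends to $1$ as $s\to 0^+$ and to $(2\pi|\xi|)^2$ as $s\to 1^-$. Your version is actually a touch more careful than the paper's -- the paper dismisses the $s\to 0^+$ case as ``obvious'' and does not explicitly justify passing the limit inside the inverse Fourier integral, whereas you supply the uniform majorant $t^{2s}\le 1+t^2$ and invoke dominated convergence, which is exactly the missing step.
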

\begin{proof}
	This follows from \eqref{Fourier}. Indeed, the case of $s=0$ is obvious. Also,
	\begin{equation*}
		\begin{split}
			-\Delta u(x)&=-\Delta\left(\mathcal{F}^{-1}(\hat{u})\right)(x)=-\Delta\left(\int_{\R^n}\hat{u}(\xi)e^{2\pi i\xi\cdot x}\,d\xi\right)\\
			&=\int_{\R^n}\left(2\pi|\xi|\right)^2\hat{u}(\xi)e^{2\pi i\xi\cdot x}\,d\xi=\mathcal{F}^{-1}\left((2\pi|\xi|)^2\hat{u}(\xi)\right).
		\end{split}
	\end{equation*}
\end{proof}
\begin{remark}\label{r3.1}
	Lemma \ref{l3.1} can also be deduced using Definition \ref{d2.1} and the assymptotics of constant $c_{n,s}$, 
	\begin{equation*}\label{3.2}
		\lim_{s\to0^+}\frac{c_{n,s}}{s(1-s)}=\frac{4n}{\omega_{n-1}}\,\,\,\textrm{ and }\,\,\,\lim_{s\to1^-}\frac{c_{n,s}}{s(1-s)}=\frac{2}{\omega_{n-1}},
	\end{equation*}
	where $\omega_{n-1}$ is the $(n-1)$-dimensional measure of the unit sphere. We refer the reader to \cite[Section 4]{DPV12}, where the calculations are carried out (see also \cite[Theorems 3 and 4]{S19} for the proof using definition \eqref{1.1}).
\end{remark}
The fractional Laplacian also enjoys the semi-group property, as states the following proposition.
\begin{proposition}\label{p3.1}
	If $u\in\mathcal{S}$, $s,t\in(0,1)$ and $s+t\le1$, then
	$$
	(-\Delta)^{s+t}u=(-\Delta)^{s}(-\Delta)^{t}u=(-\Delta)^{t}(-\Delta)^{s}u.
	$$
\end{proposition}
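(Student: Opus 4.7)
The plan is to use the Fourier characterization of the fractional Laplacian from Proposition \ref{p2.1}, which reduces the identity to the elementary multiplicative rule for real exponents. Under the Fourier transform, $(-\Delta)^s$ acts as multiplication by the symbol $m_s(\xi):=(2\pi|\xi|)^{2s}$, and one has $m_s(\xi)\cdot m_t(\xi)=m_{s+t}(\xi)$. The hypothesis $s+t\le 1$ ensures that $(-\Delta)^{s+t}$ is itself one of the operators defined in Section \ref{s2}.

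First, I would check that the composition $(-\Delta)^s(-\Delta)^t u$ is meaningful. Since $u\in\mathcal{S}$, the function $\hat u$ is rapidly decaying; the symbol $m_t(\xi)$ is smooth away from the origin and locally integrable there (as $2t>-n$), so $m_t\hat u\in L^1\cap C^\infty$ away from $0$. Hence $(-\Delta)^t u$ is a well-defined $C^\infty$ function. Moreover, the pointwise bound $|(-\Delta)^t u(x)|\le C(1+|x|)^{-n-2t}$ (of the type mentioned in the introduction following the example with the bump function) places $(-\Delta)^t u$ in the class $L_s^1(\R^n)\cap C^\infty$ discussed after Proposition \ref{p2.1}, where the singular integral definition \eqref{1.1} still applies. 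So $(-\Delta)^s\bigl((-\Delta)^t u\bigr)$ is well-defined.

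Next, I would apply Proposition \ref{p2.1} twice:
$$
\mathcal{F}\bigl((-\Delta)^s(-\Delta)^t u\bigr)(\xi)=m_s(\xi)\,\mathcal{F}\bigl((-\Delta)^t u\bigr)(\xi)=m_s(\xi)\,m_t(\xi)\,\hat u(\xi)=m_{s+t}(\xi)\,\hat u(\xi)=\mathcal{F}\bigl((-\Delta)^{s+t}u\bigr)(\xi).
$$
Applying $\mathcal{F}^{-1}$ (which is injective on tempered distributions) gives $(-\Delta)^s(-\Delta)^t u=(-\Delta)^{s+t}u$. Swapping the roles of $s$ and $t$ (or using commutativity of scalar multiplication on the Fourier side) yields the second equality.

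The main obstacle is the technical point that Proposition \ref{p2.1} is stated for Schwartz functions, whereas $(-\Delta)^t u$ has only polynomial decay and is therefore not in $\mathcal{S}$. I would handle this by either (i) verifying directly, via the remark after Proposition \ref{p2.1}, that the singular-integral definition of $(-\Delta)^s$ extends to $L_s^1\cap C^\infty$ and satisfies the same multiplier formula (by approximating $(-\Delta)^t u$ by Schwartz functions via a standard mollification/truncation argument and passing to the limit), or (ii) interpreting every step on the Fourier side in the sense of tempered distributions, where the identity of multipliers $m_s m_t=m_{s+t}$ is immediate and needs no regularity beyond $u\in\mathcal{S}$.
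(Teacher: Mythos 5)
Your proof takes the same route as the paper: both reduce the semigroup identity to the multiplicative rule $(2\pi|\xi|)^{2s}(2\pi|\xi|)^{2t}=(2\pi|\xi|)^{2(s+t)}$ on the Fourier side via Proposition \ref{p2.1} and then apply $\mathcal{F}^{-1}$. You go a step beyond the paper's terse argument by flagging (and proposing fixes for) the genuine technical wrinkle that $(-\Delta)^t u$ decays only polynomially and hence is not in $\mathcal{S}$, so the second application of the Fourier characterization needs the extension to $L^1_s\cap C^\infty$ or a tempered-distribution reading; the paper silently elides this.
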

\begin{proof}
	This directly follows from \eqref{Fourier}. Indeed,
	\begin{equation*}
		\begin{split}
			\mathcal{F}\left((-\Delta)^{s+t}u\right)&=(2\pi|\xi|)^{2(s+t)}\hat{u}=(2\pi|\xi|)^{2s}(2\pi|\xi|^{2t})\hat{u}\\
			&=\mathcal{F}\left((-\Delta)^s(-\Delta)^tu\right)\\
			&=\mathcal{F}\left((-\Delta)^t(-\Delta)^su\right).
		\end{split}
	\end{equation*} 
	It now suffices to apply the inverse Fourier transform.
\end{proof}

\section{The $s$-mean value property}\label{s4}
Classical harmonic functions enjoy the mean value property: a value of a harmonic function at a point is equal to its average over spheres (or balls) centered at that point. The converse to the mean value property also is true: if at a given point $x$ a function is equal to its average over spheres centered at $x$, then it must be harmonic in a neighborhood of $x$. Similar principle is true for $s$-harmonic functions. The non-local nature of the fractional Laplacian, however, requires refinement of the argument. Once again we see that the spheres are replaced by the ``non-local boundary''. Namely, the value of an $s$-harmonic function at a point, is equal to its ``average'' defined by a convolution of the function with the $s$-mean kernel. More precisely, for $r>0$ set
\begin{equation*}
	A_r(y):=
	\begin{cases}
		\displaystyle a_{n,s}\frac{r^{2s}}{\left(|y|^2-r^2\right)^s|y|^n}, & y\in\R^n\setminus\overline{B}_r,\\
		0, & y\in\overline{B}_r,
	\end{cases}
\end{equation*}
where the constant $a_{n,s}$ is chosen such that 
\begin{equation}\label{meankernel}
\int_{\R^n\setminus B_r}A_r(y)\,dy=1.
\end{equation}
In fact, \cite[Section 15]{G19} we have
$$
a_{n,s}:=\frac{\sin(\pi s)\Gamma\left(\frac{n}{2}\right)}{\pi^{\frac{n}{2}+1}}.
$$
The following mean value property holds, \cite[Theorem 2.2]{B16}, \cite[Proposition 15.7]{G19}.
\begin{theorem}\label{meanvalue}
	Let $u\in L_s^1(\R^n)$ be $C^{2s+\varepsilon}$ in a neighborhood of $x\in\R^n$. If for any small $r>0$ one has
	\begin{equation}\label{MV}
		u(x)=\int_{\R^n\setminus B_r}A_r(y)u(x-y)\,dy,
	\end{equation}
	then $u$ is $s$-harmonic at $x$.
\end{theorem}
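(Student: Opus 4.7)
The plan is to deduce $(-\Delta)^s u(x) = 0$ directly from \eqref{MV} by rewriting the mean value identity in a form that matches the symmetric representation \eqref{2.2} of the fractional Laplacian, and then letting $r \to 0^+$. Using \eqref{meankernel} to subtract $u(x)$ from both sides of \eqref{MV} and exploiting the radial symmetry of $A_r$ to replace the integrand by its even part in $y$, the assumption rewrites as
\[
\int_{\R^n \setminus B_r} A_r(y) \cdot \frac{2u(x) - u(x-y) - u(x+y)}{2}\, dy = 0
\]
for every sufficiently small $r > 0$. Plugging in the explicit form of $A_r$ and dividing by the positive factor $a_{n,s} r^{2s}/2$, the identity becomes
\[
\int_{\R^n \setminus B_r} \frac{2u(x) - u(x-y) - u(x+y)}{(|y|^2 - r^2)^s \, |y|^n}\, dy = 0.
\]

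Next I would send $r \to 0^+$, so that the weight $(|y|^2 - r^2)^s |y|^n$ becomes $|y|^{n+2s}$ and the integral turns, via Definition \ref{d2.1}, into $(2/c_{n,s})(-\Delta)^s u(x)$. The main obstacle is that no single integrable dominant controls the integrand uniformly in $r$, because $(|y|^2 - r^2)^s$ degenerates as $|y| \downarrow r$. To handle this, I would fix some $R > 0$ and split the integration into three regions: the thin annulus $\{r < |y| < 2r\}$, the bulk $\{2r \leq |y| \leq R\}$, and the tail $\{|y| > R\}$.

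On the bulk and the tail, the inequality $(|y|^2 - r^2)^s \geq (3/4)^s |y|^{2s}$ restores uniform comparability of the denominator with $|y|^{n+2s}$. The regularity assumption $u \in C^{2s+\e}$ in a neighborhood of $x$ gives
\[
|2u(x) - u(x-y) - u(x+y)| \leq C|y|^{2s+\e},
\]
where for $2s + \e > 1$ one uses a first order Taylor expansion and the odd linear part cancels by the symmetry of the integrand; this yields a locally integrable majorant on the bulk and allows dominated convergence. On the tail, the growth condition $u \in L_s^1(\R^n)$ provides an $r$-independent integrable majorant, so again DCT applies. For the annulus, the same Hölder estimate together with the change of variable $|y| = r(1+t)$ bounds its contribution in absolute value by
\[
C r^\e \int_0^1 \frac{(1+t)^{2s+\e-1}}{(2t+t^2)^s}\, dt = O(r^\e),
\]
which vanishes as $r \to 0^+$.

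Assembling the three pieces yields
\[
\int_{\R^n} \frac{2u(x) - u(x-y) - u(x+y)}{|y|^{n+2s}}\, dy = 0,
\]
which by Definition \ref{d2.1} is exactly $(2/c_{n,s})(-\Delta)^s u(x)$. Hence $(-\Delta)^s u(x) = 0$, i.e., $u$ is $s$-harmonic at $x$. The only genuinely delicate step is controlling the thin annulus where the mean value kernel is singular; everything else is routine bookkeeping that matches the limiting integrand with \eqref{2.2}.
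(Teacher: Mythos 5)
Your proof is correct, and it takes a genuinely different route from the paper's. The paper works with the one-sided difference $u(x)-u(x-y)$, derives $\int_{\R^n\setminus B_r}\frac{u(x)-u(x-y)}{(|y|^2-r^2)^s|y|^n}\,dy=0$, splits into $\R^n\setminus B_R$ and $B_R\setminus B_r$, and then compares the inner piece $J_r$ with its target $\int_{B_R\setminus B_r}\frac{u(x)-u(x-y)}{|y|^{n+2s}}\,dy$ by estimating the difference $H_r$; this requires subtracting the odd term $y\cdot Du(x)$ explicitly and observing it integrates to zero on the annulus. You instead symmetrize immediately — using the radial symmetry of $A_r$ to pass to the second-difference $2u(x)-u(x-y)-u(x+y)$, which aligns with the singularity-free representation \eqref{2.2} from the start — and then split into three regions (thin annulus $r<|y|<2r$, bulk $2r\le|y|\le R$, tail $|y|>R$). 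The symmetrization is the cleaner move: the linear term cancels pointwise in the second difference, so the $C^{2s+\varepsilon}$ bound applies directly without the detour through $y\cdot Du(x)$, and on the bulk and tail your inequality $(|y|^2-r^2)^s\ge(3/4)^s|y|^{2s}$ hands you an $r$-independent integrable majorant so dominated convergence closes both pieces at once, leaving only the $O(r^\varepsilon)$ annulus estimate. One small imprecision in the write-up: you say the odd linear part "cancels by the symmetry of the integrand," but in the second-difference form $2u(x)-u(x-y)-u(x+y)$ the linear terms in the Taylor expansions of $u(x\pm y)$ cancel pointwise, not by integral symmetry — that was the point of symmetrizing. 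This is a wording slip, not a gap; the estimate $|2u(x)-u(x-y)-u(x+y)|\le C|y|^{2s+\varepsilon}$ you use is correct.
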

\begin{proof}
	From \eqref{meankernel} and \eqref{MV} we get
	\begin{equation*}		
			0=u(x)-\int_{\R^n\setminus B_r}A_r(y)u(x-y)\,dy=a_{n,s}r^{2s}
			\int_{\R^n\setminus B_r}\frac{u(x)-u(x-y)}{\left(|y|^2-r^2\right)^s|y|^n}\,dy,
	\end{equation*}
therefore,
\begin{equation*}\label{3.6}
	\int_{\R^n\setminus B_r}\frac{u(x)-u(x-y)}{\left(|y|^2-r^2\right)^s|y|^n}\,dy=0.
\end{equation*}
On the other hand, \eqref{2.1}, one has
$$
(-\Delta)^su(x)=\lim_{r\to0}\int_{\R^n\setminus B_r}\frac{u(x)-u(x-y)}{|y|^{n+2s}}\,dy,
$$
hence, it is enough to show that
\begin{equation}\label{3.7}
	\lim_{r\to0}\int_{\R^n\setminus B_r}\frac{u(x)-u(x-y)}{|y|^{n+2s}}\,dy=\lim_{r\to0}\int_{\R^n\setminus B_r}\frac{u(x)-u(x-y)}{\left(|y|^2-r^2\right)^s|y|^n}\,dy.
\end{equation}
To see this, take $R>\sqrt{2}r$ and split the integral, 
\begin{equation}\label{3.8}
	\int_{\R^n\setminus B_r}\frac{u(x)-u(x-y)}{\left(|y|^2-r^2\right)^s|y|^n}\,dy=\int_{\R^n\setminus B_{R}}+\int_{B_{R}\setminus B_r}:=I_r+J_r.
\end{equation}
For $I_r$ we have
\begin{equation}\label{3.9}
	\lim_{r\to0}I_r=\int_{\R^n\setminus B_{R}}\frac{u(x)-u(x-y)}{|y|^{n+2s}}\,dy.
\end{equation}
This is because when $y\in\R^n\setminus B_{R}$, one has
$$
\frac{|y|^2}{|y|^2-r^2}<2,
$$
therefore, as $u\in L_s^1(\R^n)$, 
$$
\int_{\R^n\setminus B_{R}}\frac{|u(x)-u(x-y)|}{\left(|y|^2-r^2\right)^s|y|^n}\,dy\le2^s\int_{\R^n\setminus B_{R}}\frac{|u(x)-u(x-y)|}{|y|^{n+2s}}\,dy<\infty,
$$
and we can use the dominated convergence theorem to pass to the limit, as $r\to0$ and obtain \eqref{3.9}. To pass to the limit in $J_r$, we notice that for $y\in B_{R}\setminus B_r$ and $s<1/2$ one has
$$
|u(x)-u(x-y)|\le c|y|^{2s+\varepsilon},
$$
since $u\in C^{2s+\varepsilon}$ in a neighborhood of $x$, where $c>0$ is a universal constant. For $s\ge1/2$ the $C^{2s+\varepsilon}=C^{1,2s+\varepsilon-1}$ regularity of $u$ in the same neighborhood provides
\begin{equation}\label{3.10}
	\begin{split}
		|u(x)-u(x-y)-y\cdot Du(x)|&=\left|\int_0^1y\left(Du(x-ty)-Du(x)\right)\,dt\right|\\
		&\le|y|\int_0^1|Du(x-ty)-Du(x)|\,dt\\
		&\le c|y|^{2s+\varepsilon}.
	\end{split}
\end{equation}
Observe that
$$
\int_{B_R\setminus B_r}\frac{y\cdot Du(x)}{\left(|y|^2-r^2\right)^s|y|^n}\,dy=\int_{B_R\setminus B_r}\frac{y\cdot Du(x)}{|y|^{n+2s}}\,dy=0,
$$
since we are integrating even functions over a symmetrical domain. Therefore, setting
\begin{equation*}
	\begin{split}
		H_r&:=J_r-\int_{B_R\setminus B_r}\frac{u(x)-u(x-y)}{|y|^{n+2s}}\,dy\\
		&=\int_{B_R\setminus B_r}(u(x)-u(x-y)-y\cdot Du(x))\left[\frac{1}{\left(|y|^2-r^2\right)^s|y|^n}-\frac{1}{|y|^{n+2s}}\right]\,dy.
	\end{split}
\end{equation*}
Using \eqref{3.10}, passing to polar coordinates and changing variables by $\rho=rt$, we estimate
\begin{equation*}
	\begin{split}
		|H_r|&\le c\int_{B_R\setminus B_r}|y|^{2s+\varepsilon}\left[\frac{1}{\left(|y|^2-r^2\right)^s|y|^n}-\frac{1}{|y|^{n+2s}}\right]\,dy\\
			&\le c\int_r^R\rho^{\varepsilon-1}\left[\frac{\rho^{2s}}{(\rho^2-r^2)^s}-1\right]\,d\rho\\
			&\le cr^\varepsilon\int_{1}^{\frac{R}{r}}t^{\varepsilon-1}\left[\frac{t^s}{(t-1)^s}-1\right]\,dt.
		\end{split}
	\end{equation*}
It remains to check that
\begin{equation}\label{3.11}
	\int_{1}^{\frac{R}{r}}t^{\varepsilon-1}\left[\frac{t^s}{(t-1)^s}-1\right]\,dt<\infty, 
\end{equation}
since combining it with the previous inequality we obtain
$$
\lim_{r\to0}H_r=0,
$$
or equivalently,
$$
\lim_{r\to0}J_r=\lim_{r\to0}\int_{B_R\setminus B_r}\frac{u(x)-u(x-y)}{|y|^{n+2s}}\,dy,
$$
which together with \eqref{3.8} and \eqref{3.9} gives \eqref{3.7}. To check \eqref{3.11}, we split the integral and notice that
$$
\int_1^{\sqrt{2}}t^{\varepsilon-1}\left[\frac{t^s}{(t-1)^s}-1\right]\,dt\le c\int_1^{\sqrt{2}}\left[\frac{1}{(t-1)^s}-\frac{1}{t^s}\right]\,dt<\infty
$$
and
\begin{equation*}
	\begin{split}
		\lim_{r\to0}\int_{\sqrt{2}}^{\frac{R}{r}}t^{\varepsilon-1}\left[\frac{t^s}{(t-1)^s}-1\right]\,dt&\le\int_{\sqrt{2}}^{\infty}t^{\varepsilon-1}\left[\left(1-\frac{1}{t}\right)^{-s}-1\right]\,dt\\
		&\le c\int_{\sqrt{2}}^\infty t^{\varepsilon-2}\,dt<\infty.
	\end{split}
\end{equation*}
\end{proof}
As a consequence of Theorem \ref{meanvalue}, one obtains a representation formula via Poisson kernel for the solution of the non-local Dirichlet problem on balls (just like in the local framework). For its proof we refer the reader to \cite[Theorem 2.10]{B16} (see also \cite[p. 17]{R38}, \cite[p. 122 and 112]{L72} and \cite[Theorem 15.2]{G19}). The fractional Poisson kernel is defined by
\begin{equation}\label{poissonkernel}
	P_r(x,y):=C_{n,s}\left(\frac{r^2-|x|^2}{|y|^2-r^2}\right)^s\frac{1}{|y-x|^n},
\end{equation}
where $r>0$ and
$$
C_{n,s}:=\frac{\sin(\pi s)\Gamma(\frac{n}{2})}{\pi^{\frac{n}{2}+1}}.
$$
The choice of the constant $C_{n,s}$ guarantees that $\displaystyle\int_{\R^n\setminus B_r}P_r(x,y)\,dy=1$.
\begin{theorem}\label{dirichletproblemhom}
	For $g\in L_s^1(\R^n)\cap C(\R^n)$ the unique solution of
	\begin{equation*} 
		\begin{cases}
			(-\Delta)^{s} u=0&\text{in }\,\,\,B_r,\\	
			u=g&\text{in }\,\,\,\R^n\setminus B_r
		\end{cases} 
	\end{equation*}
	is given by
	$$
	u(x)=\int_{\R^n\setminus B_r}g(y)P_r(x,y)\,dy,\quad x\in B_r.
	$$
\end{theorem}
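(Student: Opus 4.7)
The plan is to define $u$ by the Poisson integral inside $B_r$ and by $g$ outside, and then verify in turn: (i) well-definedness and interior smoothness, (ii) $s$-harmonicity via Theorem \ref{meanvalue}, (iii) continuity across $\partial B_r$, and (iv) uniqueness via the maximum principle (proved in Section \ref{s5}). Concretely, set
$$
u(x):=\int_{\R^n\setminus B_r}g(y)\,P_r(x,y)\,dy\quad\text{for }x\in B_r,\qquad u(x):=g(x)\quad\text{for }x\in\R^n\setminus B_r.
$$
For $x$ in compact subsets of $B_r$, the explicit formula \eqref{poissonkernel} yields $P_r(x,y)\le C(1+|y|)^{-n-2s}$, so $g\in L_s^1(\R^n)$ makes the integral absolutely convergent. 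Since $x\mapsto P_r(x,y)$ is $C^\infty$ in $B_r$ with $x$-derivatives exhibiting the same $|y|$-decay, differentiation under the integral gives $u\in C^\infty(B_r)$, which is more than enough for the regularity hypothesis of Theorem \ref{meanvalue}.

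The crux is step (ii): for arbitrary $x_0\in B_r$ and $\rho\in(0,r-|x_0|)$ I would verify the $s$-mean value identity
$$
u(x_0)=\int_{\R^n\setminus B_\rho(x_0)}A_\rho(y-x_0)\,u(y)\,dy.
$$
Splitting the right-hand side according to whether $y\in B_r$ or $y\in\R^n\setminus B_r$, inserting the Poisson formula for $u$ in the first piece and $u=g$ in the second, and applying Fubini, the identity reduces (by matching integrands against $g(z)\,dz$) to the pointwise reproducing identity for the Poisson kernel
$$
P_r(x_0,z)=\int_{B_r\setminus B_\rho(x_0)}A_\rho(y-x_0)\,P_r(y,z)\,dy+A_\rho(z-x_0),\quad z\in\R^n\setminus\overline{B_r}.
$$
This identity is precisely the statement that $P_r(\cdot,z)$, extended by the Dirac mass at $z$ outside $B_r$, satisfies the mean value property at every $x_0\in B_r$. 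It can be established either by direct manipulation of the explicit formula \eqref{poissonkernel} using inversions relative to $\partial B_r$, or, more conceptually, by recalling that $P_r(x,\cdot)$ is the density of the first-exit distribution of the isotropic $2s$-stable L\'evy process from $B_r$ starting at $x$, whence the identity follows from the strong Markov property at the first exit from $B_\rho(x_0)$. Once it is in hand, Theorem \ref{meanvalue} gives $(-\Delta)^s u\equiv 0$ in $B_r$.

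For step (iii), the normalization $\int_{\R^n\setminus B_r}P_r(x,y)\,dy=1$ together with the observation that the factor $(r^2-|x|^2)^s$ in \eqref{poissonkernel} vanishes as $x\to x_0\in\partial B_r$, while $|y-x|^{-n}$ stays bounded once $y$ remains outside a fixed neighborhood of $x_0$, shows that $P_r(x,\cdot)\,dy$ concentrates on $\{x_0\}$; continuity of $g$ at $x_0$ then gives $u(x)\to g(x_0)$ as $x\to x_0$ from inside $B_r$. For step (iv), the difference $w$ of two solutions is $s$-harmonic in $B_r$ and identically zero in $\R^n\setminus B_r$, so the maximum principle of Section \ref{s5} forces $w\equiv 0$. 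The main obstacle is the reproducing identity for $P_r$ in step (ii): the purely analytic route requires a delicate computation in which the $(r^2-|x|^2)^s$ factor must be reconciled with the singularity of $A_\rho$ at $\partial B_\rho(x_0)$, while the probabilistic route is conceptually clean but presupposes identification of $P_r$ with the exit density, which is itself a nontrivial fact of potential theory. Everything else is bookkeeping on top of Theorem \ref{meanvalue}, continuity of $g$, and the forthcoming maximum principle.
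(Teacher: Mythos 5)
The paper does not give a self-contained proof of this theorem: it states that the result is ``a consequence of Theorem \ref{meanvalue}'' and otherwise defers entirely to \cite[Theorem 2.10]{B16}, Riesz \cite{R38}, Landkof \cite{L72}, and Garofalo \cite{G19}. Your proposal is therefore not comparable to a proof \emph{in} the paper, but it is comparable to the paper's stated intent, and on that score you have reconstructed the logic faithfully: reduce $s$-harmonicity of the Poisson integral to the $s$-mean value criterion of Theorem \ref{meanvalue}, prove boundary attainment by concentration of $P_r(x,\cdot)\,dy$ near $x_0$ as $x\to x_0\in\partial B_r$, and obtain uniqueness from the maximum principle of Section \ref{s5}. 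The bookkeeping steps you list (absolute convergence from $g\in L_s^1$, interior $C^\infty$ by differentiation under the integral, $u\in L_s^1(\R^n)$ because $u$ is bounded on $B_r$ and equals $g$ outside) are all sound and are indeed ``just bookkeeping''.

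The substantive content you have correctly isolated is the reproducing identity
$$
P_r(x_0,z)=\int_{B_r\setminus B_\rho(x_0)}A_\rho(y-x_0)\,P_r(y,z)\,dy+A_\rho(z-x_0),\qquad z\in\R^n\setminus\overline{B_r},\ \rho<r-|x_0|,
$$
and this is exactly where the theorem's real work lives. Without it, the reduction to Theorem \ref{meanvalue} is circular in spirit: one is using the mean-value characterization to prove the Poisson formula, but the mean-value kernel $A_\rho$ and the Poisson kernel $P_r$ are linked precisely by this identity. The paper's cited sources establish it either by Riesz's explicit Kelvin-type computation (the analytic route you mention, where the inversion $y\mapsto r^2 y/|y|^2$ turns $P_r$ into itself and the $(r^2-|x|^2)^s$ weight must be tracked through the $(|y|^2-\rho^2)^{-s}$ singularity of $A_\rho$), or by identifying $P_r(x,\cdot)$ as the harmonic measure of the $2s$-stable process and invoking the strong Markov property. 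Both are nontrivial and you are right to flag this as the obstacle rather than quietly assume it.

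Two small points worth tightening if you were to write this out in full. First, the Fubini interchange that produces the reproducing identity needs a justification that the double integral converges absolutely; this is true but requires noting that $\int_{\R^n\setminus B_r}|g(z)|P_r(y,z)\,dz$ is \emph{uniformly} bounded for $y\in B_r$ (use the normalization $\int P_r(y,z)\,dz=1$ together with continuity of $g$ near $\partial B_r$ and $g\in L_s^1$ for the tail), and that $A_\rho(\cdot-x_0)$ integrates to one over $\R^n\setminus B_\rho(x_0)$. Second, passing from ``the identity holds after testing against all admissible $g$'' to the pointwise identity in $z$ needs a density argument, though this is routine. Neither of these is a gap in the idea, only in the write-up.
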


\section{The maximum principle}\label{s5}
As it is well known, classical harmonic function in a bounded domain $\Omega\subset\R^n$ takes its extremal values on the boundary $\partial\Omega$. In other words, a non-negative harmonic function in $\Omega$ cannot vanish inside $\Omega$ (unless its identically zero). The literal analog of this for the fractional Laplacian fails: there exists a bounded fractional harmonic function $u$ that vanishes inside $\Omega$. One can construct such a function by defining $u$ outside $\Omega$ in a way that makes it feel the effect of far away data, \cite[Theorem 2.3.1]{BV16}. However, as remarked earlier, if we think of $\R^n\setminus\Omega$ as the ``non-local boundary'', several properties that the classical Laplacian enjoys remain true for the fractional Laplacian, including the maximum principle.
\begin{theorem}\label{maximumprinciple}
	If $(-\Delta)^su\ge0$ in $\Omega$ and $u\ge0$ in $\R^n\setminus\Omega$, then $u\ge0$ in $\Omega$. Moreover, $u>0$, unless $u\equiv0$.
\end{theorem}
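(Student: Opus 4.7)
The plan is to argue by contradiction via the singularity-removed representation in Definition \ref{d2.1}. Assume $u$ is regular enough on $\Omega$ (and lower semicontinuous on $\overline{\Omega}$) for $(-\Delta)^s u$ to make pointwise sense, and that $u\not\equiv0$. Suppose, toward a contradiction, that $\inf_{\R^n} u<0$. Since $u\ge0$ outside $\Omega$ and $\Omega$ is bounded, this infimum is attained at some interior point $x_0\in\Omega$, and moreover $x_0$ is a global minimum of $u$ on all of $\R^n$ because $u(x_0)<0\le u(y)$ for every $y\in\R^n\setminus\Omega$.

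At such a global minimum, the crucial observation is that for every $y\in\R^n$,
\begin{equation*}
2u(x_0)-u(x_0+y)-u(x_0-y)\le0.
\end{equation*}
Plugging this into the symmetric representation
\begin{equation*}
(-\Delta)^su(x_0)=\frac{c_{n,s}}{2}\int_{\R^n}\frac{2u(x_0)-u(x_0+y)-u(x_0-y)}{|y|^{n+2s}}\,dy
\end{equation*}
immediately gives $(-\Delta)^su(x_0)\le0$. Combined with the hypothesis $(-\Delta)^su(x_0)\ge0$, we conclude $(-\Delta)^su(x_0)=0$. Since the integrand is non-positive and has zero integral, it must vanish a.e., which together with $u(x_0\pm y)\ge u(x_0)$ forces $u(x_0+y)=u(x_0)$ for a.e. $y\in\R^n$. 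Hence $u\equiv u(x_0)<0$ a.e., contradicting $u\ge0$ on $\R^n\setminus\Omega$ (which has positive measure). This proves $u\ge0$ in $\Omega$.

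For the strict part, suppose $u\ge0$ in $\R^n$ and $u(x_1)=0$ at some $x_1\in\Omega$. Then $x_1$ is a global minimum, so exactly the same argument applies: the integrand $2u(x_1)-u(x_1+y)-u(x_1-y)=-u(x_1+y)-u(x_1-y)$ is non-positive, and $(-\Delta)^su(x_1)\ge0$ forces it to vanish a.e. Since $u\ge0$, this gives $u\equiv0$ a.e., and by (lower semi)continuity, $u\equiv0$ pointwise.

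The main substantive step is justifying that the minimum is genuinely attained at an interior point and that the pointwise evaluation of $(-\Delta)^s$ there is legitimate; modulo regularity, the rest is a clean sign-of-the-integrand argument enabled crucially by the symmetric form \eqref{2.2}, which (unlike \eqref{1.1}) is an honest Lebesgue integral of a non-positive function at the minimum, so the standard ``vanishing non-positive integrand implies constant'' deduction goes through without technicalities about principal values.
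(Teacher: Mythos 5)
Your proof is correct and rests on the same core idea as the paper's: locate a global minimum inside $\Omega$, invoke the symmetric representation \eqref{2.2}, and exploit the sign of the integrand $2u(x_0)-u(x_0+y)-u(x_0-y)$ at a global minimum. Your Step 2 (strict positivity) matches the paper's argument exactly. In Step 1 you differ slightly in the endgame: you deduce $(-\Delta)^su(x_0)=0$, observe that a non-positive integrand with zero integral must vanish a.e., and thus conclude $u\equiv u(x_0)<0$ a.e., contradicting $u\ge0$ on $\R^n\setminus\Omega$. The paper instead produces a direct strict inequality by splitting the integral at a large radius $R$: the contribution over $B_R$ is non-positive and can be dropped, while on $\R^n\setminus B_R$ (where $x_0\pm y\notin\Omega$ by boundedness of $\Omega$, so $u(x_0\pm y)\ge0$) the integrand is bounded above by $2u(x_0)/|y|^{n+2s}$, whose integral is strictly negative, yielding $0\le\cdots<0$. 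Your route is entirely valid and has the aesthetic advantage of running one uniform ``vanishing integrand'' argument in both steps; the paper's Step 1 is marginally more elementary (no a.e.\ reasoning) and makes the role of the boundedness of $\Omega$ more visible.
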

\begin{proof}
	We divide the proof into two steps.\\	
	\textit{Step 1.} First we show that $u\ge0$ in $\Omega$. If not, then there exists a point in $\Omega$, where $u$ is strictly negative. Let $x_0\in\Omega$ be a point where $u$ takes its minimum. We have $u(x_0)<0$. Note that in fact $x_0$ is a global minimum, since $u\ge0$ outside of $\Omega$, and hence
	\begin{equation}\label{3.5}
		2u(x_0)-u(x_0+y)-u(x_0-y)\le0,\,\,\,\forall y\in\R^n.
	\end{equation}
	On the other hand, for $R>0$ large enough, if $y\in\R^n\setminus B_R$, then
	both $x_0+y$ and $x_0-y$ stay outside of $\Omega$, and hence,
	\begin{equation}\label{3.6}
		u(x_0+y)\ge0,\,\,\,\textrm{and}\,\,\,u(x_0-y)\ge0.
	\end{equation}
	Consequently, using \eqref{2.2}, \eqref{3.5} and \eqref{3.6}, we obtain
	\begin{equation*}
		\begin{split}
			0&\le\int_{\R^n}\frac{2u(x_0)-u(x_0+y)-u(x_0-y)}{|y|^{n+2s}}\,dy\\
			&\le\int_{\R^n\setminus B_R}\frac{2u(x_0)-u(x_0+y)-u(x_0-y)}{|y|^{n+2s}}\,dy\\
			&\le\int_{\R^n\setminus B_R}\frac{2u(x_0)}{|y|^{n+2s}}\,dy<0,
		\end{split}
	\end{equation*}
	a contradiction.\\
	\textit{Step 2.} We now show that the inequality is strict in $\Omega$, unless $u\equiv0$. If that is not the case, then there is $z\in\Omega$ such that $u(z)=0$. Step 1 provides $u(z+y)\ge0$ and $u(z-y)\ge0$ for any $y\in\R^n$. Since $(-\Delta)^su(z)\ge0$, \eqref{2.2} implies
	\begin{equation*}
		\begin{split}
			0&\le\int_{\R^n}\frac{2u(z)-u(z+y)-u(z-y)}{|y|^{n+2s}}\,dy\\
			&=-\int_{\R^n}\frac{u(z+y)+u(z-y)}{|y|^{n+2s}}\,dy\le0,
		\end{split}
	\end{equation*}
	which is possible only when $u\equiv0$.
\end{proof}
As a direct consequence, we get the comparison principle for the fractional Laplacian and uniqueness of the solution of the Dirichlet problem.
\begin{corollary}\label{c3.1}
	If $(-\Delta)^su\ge0$, $(-\Delta)^sv\ge0$ in $\Omega$ and $u\ge v$ outside of $\Omega$, then $u\ge v$ in the whole $\R^n$.
\end{corollary}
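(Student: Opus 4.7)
The plan is to reduce the statement to the maximum principle already established in Theorem \ref{maximumprinciple}, reading the hypothesis (as dictated by the name \emph{comparison principle}) as $(-\Delta)^s u \ge (-\Delta)^s v$ in $\Omega$ rather than the two inequalities being independent; this is the only interpretation that makes the conclusion correct in general. Set $w := u - v$. Using the linearity of the fractional Laplacian recalled at the beginning of Section \ref{s3}, we compute
\begin{equation*}
	(-\Delta)^s w = (-\Delta)^s u - (-\Delta)^s v \ge 0 \quad \text{in } \Omega,
\end{equation*}
and the exterior assumption $u \ge v$ in $\R^n \setminus \Omega$ immediately gives $w \ge 0$ in $\R^n \setminus \Omega$.

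Thus $w$ satisfies exactly the hypotheses of Theorem \ref{maximumprinciple}, which yields $w \ge 0$ in $\Omega$ and hence $u \ge v$ throughout $\R^n$. Invoking the strict-inequality part of Theorem \ref{maximumprinciple} one also gets the dichotomy that either $u > v$ in $\Omega$ or $u \equiv v$ on the whole of $\R^n$.

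There is essentially no obstacle of substance: the only thing to check is that the subtraction $(-\Delta)^s u - (-\Delta)^s v$ is meaningful pointwise, which requires $u, v \in L^1_s(\R^n)$ together with enough regularity in $\Omega$ (e.g.\ $C^{2s+\varepsilon}$ near each point) for both fractional Laplacians to be defined in the classical sense, as discussed after the introduction of \eqref{1.1}. Under this mild hypothesis, linearity transfers the two-sided assumption on $(u,v)$ into a one-sided assumption on $w$, and the argument is complete. As an immediate byproduct one obtains uniqueness for the non-local Dirichlet problem: if $u_1$ and $u_2$ both solve $(-\Delta)^s u = f$ in $\Omega$ with $u_1 = u_2 = g$ in $\R^n \setminus \Omega$, applying the corollary to the pair $(u_1, u_2)$ and then to $(u_2, u_1)$ forces $u_1 \equiv u_2$.
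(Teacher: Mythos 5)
Your proof is correct and is exactly the argument the paper has in mind (the paper states Corollary~\ref{c3.1} without proof, as ``a direct consequence'' of Theorem~\ref{maximumprinciple}): set $w:=u-v$, use linearity of $(-\Delta)^s$, and apply the maximum principle to $w$. You also rightly flagged that the printed hypothesis ``$(-\Delta)^sv\ge0$'' is a typo --- it should be ``$(-\Delta)^sv\le0$'', or more generally ``$(-\Delta)^su\ge(-\Delta)^sv$'' in $\Omega$ --- since two supersolutions cannot be compared (for instance, solving $(-\Delta)^su=1$ and $(-\Delta)^sv=2$ in $\Omega$ with both vanishing outside gives $v>u$ in $\Omega$, contradicting the conclusion as written); your correction is the interpretation under which the corollary is true and under which the reduction to Theorem~\ref{maximumprinciple} goes through.
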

\begin{corollary}\label{c3.2}
	If $f\in C(\Omega)$ and $\varphi\in C(\R^n\setminus\Omega)$, then there is a unique $u\in L_1^s(\R^n)\cap C_{\loc}^{2s+\varepsilon}$ such that $(-\Delta)^su=f$ in $\Omega$ and $u=\varphi$ in $\R^n\setminus\Omega$.
\end{corollary}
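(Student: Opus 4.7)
The statement splits into uniqueness and existence. Uniqueness is immediate: if $u_1$, $u_2$ are two solutions in the stated class, then by linearity $w := u_1 - u_2$ satisfies $(-\Delta)^s w = 0$ in $\Omega$ and $w = 0$ in $\R^n\setminus\Omega$. Applying Corollary \ref{c3.1} to $w$ and to $-w$ yields $w \equiv 0$.

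For existence, I would first reduce to the homogeneous case by subtracting off a particular solution built from the fundamental solution \eqref{fundamental}. Let $\tilde f$ denote $f$ extended by zero outside $\Omega$; then $\tilde f$ has compact support, and
\[
w(x) := \int_{\R^n}\phi(x-y)\tilde f(y)\,dy
\]
is well defined, lies in $L_s^1(\R^n)$ (since $\phi(x)\sim|x|^{-(n-2s)}$ at infinity for $n\ge 2$, with the logarithmic analogue for $n=1$), and satisfies $(-\Delta)^s w = f$ in $\Omega$ via the distributional identity $(-\Delta)^s\phi = \delta_0$ recalled in Section \ref{extensionsection}. The function $u - w$ is then forced to be $s$-harmonic in $\Omega$ with exterior datum $\psi := \varphi - w|_{\R^n\setminus\Omega} \in C(\R^n\setminus\Omega)$, and the task is reduced to solving that homogeneous Dirichlet problem.

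For the homogeneous problem I would invoke Perron's method. Define the Perron class
\[
\mathcal{P} := \{\, v \in L_s^1(\R^n)\cap C(\R^n) :\ (-\Delta)^s v \le 0 \text{ in } \Omega,\ v \le \psi \text{ in } \R^n\setminus\Omega \,\}
\]
and set $h(x) := \sup_{v\in\mathcal{P}}v(x)$. The maximum principle (Theorem \ref{maximumprinciple}) bounds $h$ from above by $\sup\psi$, so the supremum is finite. The standard Perron argument, lifting each competitor by its Poisson integral on balls $B_r(x_0)\subset\Omega$ (available through Theorem \ref{dirichletproblemhom}), shows that $h$ is $s$-harmonic in $\Omega$. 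Interior $C^{2s+\varepsilon}_{\loc}$ regularity of $h$, and hence of $u = h + w$, then follows from the Schauder-type estimates to be developed in Section \ref{s8}.

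The main obstacle is showing that $h$ attains the data $\psi$ continuously at $\partial\Omega$. On the open set $\R^n\setminus\overline{\Omega}$ the values are prescribed, so continuity there is automatic; the delicate step is constructing fractional barriers at boundary points of $\Omega$, which requires mild regularity of $\partial\Omega$ (an exterior ball condition suffices). This barrier construction is the sole non-routine ingredient and is classical in the fractional potential theory literature.
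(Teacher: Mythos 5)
The paper offers no actual proof of this corollary: it presents both Corollary \ref{c3.1} and Corollary \ref{c3.2} jointly as a ``direct consequence'' of the maximum principle, but only the comparison/uniqueness part follows directly from Theorem \ref{maximumprinciple}. Your uniqueness argument (apply the comparison principle to $w=u_1-u_2$ and to $-w$) is exactly what the paper has in mind and is correct. The existence half is not established anywhere in the paper for a general bounded domain $\Omega$; the only existence results proved are Theorems \ref{dirichletproblemhom}, \ref{dirichletproblemnonhom} and \ref{dirichletproblem}, all of which are for balls. So your attempt to supply an existence proof is genuinely extra work, and it is useful that you flagged the two real difficulties.

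That said, two gaps in the proposal are worth naming explicitly, both of which also expose imprecision in the statement of the corollary itself. First, the barrier construction you correctly identify as the non-routine step requires some geometric regularity of $\partial\Omega$ (e.g.\ an exterior cone or ball condition); without any such hypothesis, continuous attainment of the exterior data at $\partial\Omega$ can fail, so the corollary as written is stronger than what Perron's method delivers. Second, and independently, with $f$ merely in $C(\Omega)$ the Riesz-potential particular solution $w=\phi*\tilde f$ need not be $C^{2s+\varepsilon}_{\loc}$ --- this mirrors the classical fact that the Newtonian potential of a merely continuous density need not be $C^2$. The paper itself implicitly acknowledges this: Theorem \ref{dirichletproblemnonhom} assumes $f\in C^{2s+\varepsilon}(B_r)\cap C(\overline B_r)$, not just continuity. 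So the regularity class $C^{2s+\varepsilon}_{\loc}$ claimed for $u$ is likely unreachable under the stated hypothesis $f\in C(\Omega)$, and your reduction step inherits this problem. In short: uniqueness is fine and matches the paper; existence is not proved in the paper, your sketch is the right sort of plan, but the statement needs additional hypotheses (boundary regularity of $\Omega$, Hölder regularity of $f$) for either your proof or any other to close.
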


We refer the interested reader to \cite[Proposition 4.1]{RO16} for a maximum principle for a general class of non-local operators.

\section{The Harnack inequality}\label{s6}
The Harnack principle for the classical Laplacian states that if a function $u$ is harmonic and non-negative in $B_r(x_0)$, then in a smaller ball its values are all comparable, i.e., there exists a constant $C>0$ independent of $u$, $x_0$ and $r>0$ such that
$$
\sup_{B_{\frac{r}{2}}(x_0)}u\le C\inf_{B_{\frac{r}{2}}(x_0)}u.
$$
Since the maximum principle for the fractional Laplacian needed a refinement, it is not surprising that the Harnack inequality also needs a refinement, as the classical Harnack inequality fails in the non-local framework, \cite[page 9]{AV19}, \cite[Lemma 2.1]{BC83}. It can be seen by constructing a counterexample using approximation of $w(x):=|x|^2$ by fractional harmonic functions (for the proof of this remarkable property see Theorem \ref{approximations} below). Namely, for $\varepsilon\in(0,\frac{1}{8})$ let $v_\varepsilon$ be fractional harmonic approximation of $w$ in $B_1$, i.e., $(-\Delta)^sv_\varepsilon=0$ in $B_1$ and
$$
\|w-v_\varepsilon\|_{C^2(B_1)}<\varepsilon.
$$
Then in $B_1\setminus B_{1/4}$ one has
$$
v_\varepsilon(x)\ge w(x)-\|w-v_\varepsilon\|_{L^\infty(B_1)}\ge\frac{1}{16}-\varepsilon>\varepsilon,
$$
but
$$
v_\varepsilon(0)\le w(0)+\|w-v_\varepsilon\|_{L^\infty(B_1)}<\varepsilon.
$$
Thus, $v_\varepsilon(0)<v_\varepsilon(x)$ in $B_1\setminus B_{1/4}$. Hence, 
$$
\inf_{B_1}v_\varepsilon=\inf_{\overline{B}_{1/4}}v_\varepsilon.
$$
Set now
$$
u_\varepsilon(x):=v_\varepsilon(x)-v_\varepsilon(y),\,\,\,x\in B_1,
$$
where $y\in\overline{B}_{1/4}$ is a point, where $v_\varepsilon$ reaches its infimum. By definition $u_\varepsilon$ is $s$-harmonic and non-negative in $B_1$. Moreover, $u_\varepsilon>0$ in $B_1\setminus B_{1/4}$ and still
$$
\inf_{B_{1/2}}u_\varepsilon=u_\varepsilon(y)=0.
$$
Therefore, the classical Harnack inequality fails in the non-local setting. However, a suitably refined Harnack inequality holds.
\begin{theorem}\label{Harnackinequality}
	If $u\in L^\infty(\R^n)\cap C^2(B_r(x_0))$ is $s$-harmonic in $B_r(x_0)$ and $u\ge0$ in $\R^n$, then there exists a constant $C>0$ independent of $u$, $x_0$ and $r>0$ such that
	$$
	\sup_{B_{\frac{r}{2}}(x_0)}u\le C\inf_{B_{\frac{r}{2}}(x_0)}u.
	$$
\end{theorem}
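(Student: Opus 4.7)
The plan is to reduce to a model configuration and then exploit the Poisson representation formula (Theorem \ref{dirichletproblemhom}) together with the global non-negativity of $u$. The classical counterexample sketched just before the statement relies on allowing a function to be negative far from $\Omega$; once $u\ge0$ on all of $\R^n$, the Poisson formula writes $u$ on a smaller ball as a non-negative integral of its values outside that ball, and a pointwise comparison of Poisson kernels does the job.

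First, by translation invariance and the homogeneity $(-\Delta)^s u(\lambda\cdot)=\lambda^{2s}(-\Delta)^su(\lambda\cdot)$, setting $v(x):=u(x_0+rx)$ reduces matters to the case $x_0=0$, $r=1$: $v$ is $s$-harmonic in $B_1$, non-negative on $\R^n$, and bounded, and it suffices to produce a constant $C=C(n,s)$ with $\sup_{B_{1/2}}v\le C\inf_{B_{1/2}}v$. Fix an intermediate radius $\rho=3/4$. Since $v$ is bounded, it lies in $L_s^1(\R^n)$, and Theorem \ref{dirichletproblemhom} applied on $B_\rho$ gives
\begin{equation*}
v(x)=\int_{\R^n\setminus B_\rho}P_\rho(x,y)\,v(y)\,dy,\qquad x\in B_\rho,
\end{equation*}
with $P_\rho$ as in \eqref{poissonkernel}. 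The key estimate is that for every $x,z\in B_{1/2}$ and every $y\in\R^n\setminus B_\rho$,
\begin{equation*}
\frac{P_\rho(x,y)}{P_\rho(z,y)}=\left(\frac{\rho^2-|x|^2}{\rho^2-|z|^2}\right)^{\!s}\left(\frac{|y-z|}{|y-x|}\right)^{\!n}\le C,
\end{equation*}
for a constant depending only on $n$ and $s$. The first factor is trapped since $\rho^2-|x|^2,\rho^2-|z|^2\in[5/16,9/16]$; for the second, $|y-x|\ge\rho-\tfrac12=\tfrac14$ bounds the denominator below, while $|y-z|\le|y-x|+|x-z|\le|y-x|+1$ bounds the ratio above by $1+4=5$. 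Multiplying by $v(y)\ge0$ and integrating yields $v(x)\le C v(z)$ for all $x,z\in B_{1/2}$, and taking the supremum in $x$ and infimum in $z$ finishes the argument.

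The main obstacle I foresee is the regularity hypothesis for invoking Theorem \ref{dirichletproblemhom}: that theorem is stated with boundary data in $L_s^1(\R^n)\cap C(\R^n)$, whereas $u$ is only assumed to lie in $L^\infty(\R^n)\cap C^2(B_r(x_0))$. One way around this is to use the Poisson integral on $B_\rho$ to produce the canonical solution with datum $u|_{\R^n\setminus B_\rho}$ and then invoke the comparison principle (Corollary \ref{c3.1}) to identify it with $u$ on $B_\rho$; alternatively, one can mollify $u$ outside a slightly larger concentric ball and pass to the limit. Either route preserves all of the above estimates, so that the final constant depends only on $n$ and $s$, as claimed.
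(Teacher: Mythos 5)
Your proof is correct, but it takes a genuinely different route from the one in the paper. The paper deduces the result from the Caffarelli--Silvestre extension: it lifts $u$ to the solution $v$ of \eqref{2.8}--\eqref{2.9} on $\R^{n}\times[0,\infty)$, reflects across $\{y=0\}$ via Lemma \ref{l2.1} to obtain a nonnegative weak solution of $\Div(|y|^{1-2s}\nabla\tilde v)=0$ in an $(n+1)$-dimensional ball, and then invokes the Harnack inequality for degenerate elliptic equations with $A_2$ weights (Fabes--Kenig--Serapioni, \cite{FKS82}). You instead work entirely downstairs with the Poisson representation on $B_\rho\subsetneq B_1$ and prove a two-sided kernel comparison $P_\rho(x,\cdot)\asymp P_\rho(z,\cdot)$ uniformly for $x,z\in B_{1/2}$, which, upon integrating against the nonnegative exterior data, immediately yields $\sup\le C\inf$. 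The paper in fact acknowledges your route as the classical one, citing \cite[Lemma 2.1]{BC83} and Theorem \ref{dirichletproblemhom}, and opts for the extension proof as more compact. Your approach buys elementariness --- it needs nothing beyond the explicit kernel and positivity --- whereas the paper's approach buys brevity once the extension machinery and \cite{FKS82} are taken as black boxes. Your kernel estimates are correct (with $\rho=3/4$, the first factor lies in $[(5/9)^s,(9/5)^s]$ and the second in $[5^{-n},5^n]$), and your remark about the hypotheses of Theorem \ref{dirichletproblemhom} is the right thing to flag: the clean way to justify the representation on $B_\rho$ is indeed to observe that $u$ is the unique solution of the Dirichlet problem on $B_\rho$ with exterior datum $u|_{\R^n\setminus B_\rho}$ (uniqueness via Corollary \ref{c3.2}); the interior regularity $u\in C^\infty(B_1)$ from Theorem \ref{regularitytheorem} makes $u$ continuous across $\partial B_\rho$, so the issue is only the regularity of the data far from $B_1$, for which a mollification-and-limit argument or a direct appeal to \cite[Theorem 2.10]{B16} (which only needs $g\in L^1_s$) suffices.
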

\begin{proof}
	This can be proved as in the classical case, using Theorem \ref{dirichletproblemhom}, \cite[Lemma 2.1]{BC83}. Another proof can be found in \cite[Proposition 2.3.4]{BV16}. The proof we bring here is much more compact and makes use of the extension argument, \cite[Theorem 5.1]{CS07}. 
	
	Let $v:\R^n\times[0,+\infty)$ be the solution of the extension problem \eqref{2.8}-\eqref{2.9}. Since $u\ge0$ in $\R^n$, recalling \eqref{Poisson}, also $v\ge0$. If $\tilde{v}$ is the reflection of $v(x,y)$ through the hyperplane $\{y=0\}$, \eqref{reflexion}, then as $u$ is fractional harmonic in $B_r(x_0)$, Lemma \ref{l2.1} yields
	$$
	\Div\left(|y|^{1-2s}\nabla\tilde{v}\right)=0
	$$
	in the $(n+1)$ dimensional ball of radius $r$ centered at $(x_0,0)$. We can apply the Harnack inequality for $\tilde{v}$, \cite[Lemma 2.3.5]{FKS82}, which gives the Harnack inequality for $u$.
\end{proof}

\section{Liouville theorem}\label{s7}
For the classical Laplacian the Liouville theorem states that entire harmonic functions that are bounded from below (or above) are constants. The same conclusion is true for entire fractional harmonic functions, \cite{BKN02}. In fact, entire $s$-harmonic functions are affine, and constant when $0<s\le1/2$, \cite[Theorem 1.3]{CDL15}, \cite[Theorem 1.1]{F16}. Here we bring a proof of a Liouville type theorem under weaker condition, obtained in \cite[Theorem 1.2]{CDL15}.
\begin{theorem}\label{Liouville}
	If $u\in L_s^1(\R^n)$ is $s$-harmonic and 
	\begin{equation}\label{liouvillecondition}
	\liminf_{|x|\to\infty}\frac{u(x)}{|x|^\gamma}\ge0
	\end{equation}
	for some $\gamma\in[0,1]$, $\gamma<2s$, then $u$ is a constant in $\R^n$.
\end{theorem}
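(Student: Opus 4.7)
The plan is to show that $u(x)=u(0)$ for every $x\in\R^n$; since the hypothesis is translation invariant this forces $u$ to be constant. Fix $x\in\R^n$ and, for $R>2|x|$, apply Theorem \ref{dirichletproblemhom} with boundary data $u|_{\R^n\setminus B_R}$: this is legitimate because $u\in C^\infty_{\loc}$ by Section \ref{s11}, and $u$ restricted to $\R^n\setminus B_R$ lies in $L_s^1(\R^n)\cap C$. Subtracting the two Poisson representations (at $0$ and at $x$) yields
$$u(x)-u(0)=\int_{\R^n\setminus B_R}\bigl[P_R(x,y)-P_R(0,y)\bigr]u(y)\,dy,$$
and the task reduces to showing the right-hand side becomes negligible as $R\to\infty$.

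The first technical step is a pointwise bound on the kernel difference. Using \eqref{poissonkernel} together with the mean value theorem applied to $t\mapsto t^s$ (to handle $(R^2-|x|^2)^s-R^{2s}$) and to $z\mapsto|z|^{-n}$ (to handle $|y-x|^{-n}-|y|^{-n}$, valid once $|x|\le|y|/2$), one obtains
$$\bigl|P_R(x,y)-P_R(0,y)\bigr|\le C\Bigl(\tfrac{|x|}{|y|}+\tfrac{|x|^2}{R^2}\Bigr)P_R(0,y),\qquad|y|\ge R,\ |x|\le R/2.$$
Given $\e>0$, choose $R_\e$ so that $u(y)\ge-\e|y|^\gamma$ for $|y|>R_\e$, and for $R>R_\e$ split the integrand as $u(y)=[u(y)+\e|y|^\gamma]-\e|y|^\gamma$; the first bracket is nonnegative on $\R^n\setminus B_R$.

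The pivotal integrability estimate, where the assumption $\gamma<2s$ enters, is
$$\int_{\R^n\setminus B_R}P_R(0,y)|y|^\gamma\,dy=C\,R^\gamma\int_1^\infty(t^2-1)^{-s}t^{\gamma-1}\,dt\le CR^\gamma,$$
obtained by polar coordinates and the substitution $\rho=Rt$; the $t$-integral converges at infinity precisely because $\gamma-1-2s<-1$. An analogous computation gives $\int P_R(0,y)|y|^{\gamma-1}\,dy\le CR^{\gamma-1}$. Combining the kernel bound with these two estimates, the nonnegativity of $u(y)+\e|y|^\gamma$, and the identity $\int P_R(0,y)u(y)\,dy=u(0)$, one reaches
$$|u(x)-u(0)|\le C\Bigl(\tfrac{|x|}{R}+\tfrac{|x|^2}{R^2}\Bigr)\bigl(|u(0)|+C\e R^\gamma\bigr)+C\e\bigl(|x|R^{\gamma-1}+|x|^2R^{\gamma-2}\bigr).$$
Letting $R\to\infty$ with $\e$ fixed, every surviving contribution is at most $C|x|\e$; then letting $\e\to 0$ concludes $u(x)=u(0)$.

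The main obstacle is the borderline case $\gamma=1$: the factors $R^{\gamma-1}=1$ do not decay as $R\to\infty$, so the argument cannot be closed in a single limit but requires the iterated procedure (first $R\to\infty$, then $\e\to 0$). This is also why the secondary hypothesis $\gamma\le 1$ is needed: if $\gamma$ were strictly greater than $1$, the surviving $R^{\gamma-1}$ terms would blow up and the estimate would fail. In summary, the bound $\gamma<2s$ controls the size of $\int P_R(0,y)|y|^\gamma\,dy$ as $R^\gamma$, while $\gamma\le 1$ ensures the residual $R$-independent terms remain absorbed into the small parameter $\e$.
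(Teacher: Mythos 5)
Your argument is correct and mirrors the paper's proof in structure: both start from the Poisson representation of $u$ on $B_R$, split $u(y)=[u(y)+\e|y|^\gamma]-\e|y|^\gamma$ using the hypothesis, exploit that the variation of $P_R$ in its center argument is of order $O(1/R)$ relative to $P_R(0,y)$, invoke the key estimate $\int_{\R^n\setminus B_R}P_R(0,y)|y|^\gamma\,dy\le CR^\gamma$ (where $\gamma<2s$ enters), and finish by sending $R\to\infty$ and then $\e\to0$. The only substantive difference is that you work with the finite kernel difference $P_R(x,y)-P_R(0,y)$ and conclude $u(x)=u(0)$ directly, whereas the paper differentiates the Poisson formula, proves $D_\nu u\ge0$ for every unit vector $\nu$, and then notes that applying this to $\pm\nu$ forces $Du\equiv0$. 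Your finite-difference version sidesteps both the (unstated) justification of differentiation under the integral sign and the vary-$\nu$ step, at the small cost of establishing the two-sided pointwise kernel-difference bound; in particular, your identification of $\gamma=1$ as the borderline case where the iterated limit ($R\to\infty$ before $\e\to0$) is genuinely needed accurately reflects where the estimates are tight.
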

\begin{proof}
	This follows from the fact that for $|x|<r$,
	\begin{equation}\label{3.14}
		u(x)=\int_{|y|>r}P_r(x,y)u(y)\,dy,
	\end{equation}
where $P_r$ is the fractional Poisson kernel for the ball $B_r$, defined by \eqref{poissonkernel}, Theorem \ref{dirichletproblemhom}. Notice that it is enough to show that for all unit vectors $\nu$ one has
\begin{equation}\label{nonegativederivative}
	D_\nu u\ge0.
\end{equation}
Indeed, since $\nu$ is arbitrary, then $Du=0$, hence $u$ is a constant in $\R^n$. To see \eqref{nonegativederivative}, using \eqref{3.14} we calculate
$$
D_iu(x)=-\int_{|y|>r}P_r(x,y)\left[\frac{2sx_i}{r^2-|x|^2}+\frac{n(x_i-y_i)}{|y-x|^2}\right]u(y)\,dy,
$$
therefore,
\begin{equation}\label{3.16}
	D_\nu u(x)=-\int_{|y|>r}P_r(x,y)\left[\frac{2sx\cdot\nu}{r^2-|x|^2}+\frac{n(x-y)\cdot\nu}{|y-x|^2}\right]u(y)\,dy.
\end{equation}
On the other hand, for any $\varepsilon>0$ fixed and $|y|$ sufficiently large, \eqref{liouvillecondition} implies
\begin{equation}\label{3.18}
	u(y)\ge-\varepsilon|y|^\gamma.
\end{equation}
For each fixed $x$, one can choose $r>0$ large enough to guarantee
\begin{equation}\label{3.19}
	\left|\frac{2sx\cdot\nu}{r^2-|x|^2}\right|\le\frac{1}{r},
\end{equation}
and for $|y|>r$ also
\begin{equation}\label{3.20}
	\left|\frac{n(x-y)\cdot\nu}{|y-x|^2}\right|\le\frac{n}{|y-x|}\le\frac{2n}{r}.
\end{equation}
Rewriting \eqref{3.16} as
\begin{equation*}
	\begin{split}
		D_\nu u(x)=&-\int_{|y|>r}P_r(x,y)\left[\frac{2sx\cdot\nu}{r^2-|x|^2}+\frac{n(x-y)\cdot\nu}{|y-x|^2}\right]\left[u(y)+\varepsilon|y|^\gamma\right]\,dy\\
		&+\int_{|y|>r}P_r(x,y)\left[\frac{2sx\cdot\nu}{r^2-|x|^2}+\frac{n(x-y)\cdot\nu}{|y-x|^2}\right]\varepsilon|y|^\gamma\,dy:=I+J,
	\end{split}
\end{equation*}
and using \eqref{3.18}, \eqref{3.19}, \eqref{3.20} and \eqref{3.14}, we have
\begin{equation}\label{3.21}
	\begin{split}
		I&\ge-\frac{2n+1}{r}\int_{|y|>r}P_r(x,y)\left[u(y)+\varepsilon|y|^\gamma\right]\,dy\\
		&=-\frac{2n+1}{r}u(x)-\frac{2n+1}{r}\varepsilon\int_{|y|>r}P_r(x,y)|y|^\gamma\,dy.
	\end{split}
\end{equation}
Clearly the first term in the right hand side of \eqref{3.21} goes to zero, as $r\to\infty$. We now aim to estimate the second term. Using definition of the Poisson kernel, \eqref{poissonkernel}, triangle inequality and changing variables (first $|y|:=\tau$ then $\tau:=rt$), for a constant $C>0$ we obtain
\begin{equation*}
	\begin{split}
		&\frac{2n+1}{r}\varepsilon\int_{|y|>r}P_r(x,y)|y|^\gamma\,dy\\
		&=\frac{C\varepsilon}{r}\left(r^2-|x|^2\right)^s\int_{|y|>r}\frac{|y|^\gamma}{\left(|y|^2-r^2\right)^s|y-x|^n}\,dy\\
		&\le\frac{C\varepsilon}{r}\left(r^2-|x|^2\right)^s\int_{|y|>r}\frac{|y|^\gamma}{\left(|y|^2-r^2\right)^s(|y|-|x|)^n}\,dy\\
		&=\frac{C\varepsilon}{r}\left(r^2-|x|^2\right)^s\int_{r}^{\infty}\frac{\tau^{\gamma+n-1}}{\left(\tau^2-r^2\right)^s(\tau-|x|)^n}\,d\tau\\
		&=\frac{C\varepsilon}{r^{2s-\gamma+1}}\left(r^2-|x|^2\right)^s\int_{1}^{\infty}\frac{t^{\gamma+n-1}}{\left(t^2-1\right)^s(t-\frac{|x|}{r})^n}\,dt\\
		&\le\frac{C\varepsilon}{r^{1-\gamma}}\int_{1}^{\infty}\frac{t^{\gamma+n-1}}{\left(t^2-1\right)^s(t-\frac{|x|}{r})^n}\,dt\le C\varepsilon.
	\end{split}
\end{equation*}
The last inequality is a consequence of the fact that the previous integral is convergent (since $\gamma$ is assumed to be less than $2s$), and $\gamma\le1$. Also,
$$
|J|\le\frac{2n+1}{r}\varepsilon\int_{|y|>r}P_r(x,y)|y|^\gamma\,dy\le C\varepsilon.
$$
Hence, for $r$ large enough, 
$$
D_\nu u(x)\ge-C\varepsilon.
$$
Letting $\varepsilon\to0$ in the last inequality, we deduce \eqref{nonegativederivative}.
\end{proof}
\begin{remark}\label{Liouvilleremark}
Theorem \ref{Liouville} has interesting consequences. If $(-\Delta)^su=P$ in $\R^n$ in the distributional sense, \eqref{distribution}, where $s\in(0,1)$ and $P$ is a polynomial, then $u$ is affine and $P=0$, \cite[Theorem 1.2]{F16}. Furthermore, if $p\ge1$ and $u\in L^p(\R^n)$ is fractional harmonic in the sense of distributions, then $u\equiv0$, \cite[Corollary 1.3]{F16}.
\end{remark}

\section{Regularity estimates}\label{s8}
The following regularity estimates are from \cite{S07}.
\begin{lemma}\label{l4.1}
	If $u\in C^{\alpha}(\R^n)$ for $\alpha\in(2s,1]$, then $(-\Delta)^su\in C^{\alpha-2s}(\R^n)$. Moreover,
	$$
	\left[(-\Delta)^su\right]_{C^{\alpha-2s}}\le C[u]_{C^{\alpha}},
	$$
	where $C>0$ is a constant depending only on $\alpha$, $s$ and $n$.
\end{lemma}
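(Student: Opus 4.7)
The plan is to bound $|(-\Delta)^s u(x) - (-\Delta)^s u(z)|$ by a constant times $[u]_{C^\alpha}|x-z|^{\alpha-2s}$ for arbitrary $x \neq z \in \R^n$, working throughout with the symmetrized representation \eqref{2.2} from Definition \ref{d2.1} so that no principal value is needed. I abbreviate the second difference by $\delta_y u(w) := 2u(w) - u(w+y) - u(w-y)$ and set $r := |x-z|$. Writing
\begin{equation*}
(-\Delta)^s u(x) - (-\Delta)^s u(z) = \frac{c_{n,s}}{2}\int_{\R^n} \frac{\delta_y u(x) - \delta_y u(z)}{|y|^{n+2s}}\,dy,
\end{equation*}
I would split the domain of integration at $|y| = r$ and bound the two resulting pieces by different means.

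For the inner region $\{|y| \le r\}$, I would control each of $|\delta_y u(x)|$ and $|\delta_y u(z)|$ separately by the first-order H\"older bound $|\delta_y u(w)| \le 2[u]_{C^\alpha}|y|^\alpha$, valid because $\alpha \le 1$ so the H\"older exponent can be applied directly to each of the two increments $u(w\pm y) - u(w)$. Since $\alpha > 2s$, the resulting integral $\int_0^r t^{\alpha - 2s - 1}\,dt$ converges and produces a contribution of order $[u]_{C^\alpha}\, r^{\alpha-2s}$.

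For the outer region $\{|y| > r\}$, I would instead compare the two second differences term by term, using the H\"older continuity of $u$ across a displacement of length $r$:
\begin{equation*}
|\delta_y u(x) - \delta_y u(z)| \le 2|u(x)-u(z)| + |u(x+y) - u(z+y)| + |u(x-y) - u(z-y)| \le 4\,[u]_{C^\alpha}\, r^\alpha.
\end{equation*}
Coupled with $\int_{|y|>r} |y|^{-n-2s}\,dy = C\, r^{-2s}$, this again yields a contribution of order $[u]_{C^\alpha}\, r^{\alpha-2s}$.

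Adding the two bounds gives $|(-\Delta)^s u(x) - (-\Delta)^s u(z)| \le C\,[u]_{C^\alpha}\,|x-z|^{\alpha-2s}$, which is exactly the claim, with $C$ depending only on $n$, $s$, and $\alpha$ (through $c_{n,s}$ and the factors $1/(\alpha-2s)$ and $1/(2s)$). There is no single difficult step; the whole proof is the standard scale-splitting at $|y| = r$, and the main point is that the two estimates are complementary: the pointwise second-difference bound, applied separately at $x$ and $z$, has no decay at infinity, while the term-by-term H\"older comparison is non-integrable near the origin. Trading the two bounds precisely at the scale $r = |x-z|$ is what produces the sharp H\"older exponent $\alpha - 2s$.
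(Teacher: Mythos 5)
Your proof is correct and takes essentially the same route as the paper: split the integral at $|y| = r := |x-z|$, bound the inner piece by applying the H\"older estimate to each increment of $u$ separately (giving $[u]_{C^\alpha}\,r^{\alpha-2s}$), bound the outer piece by a term-by-term H\"older comparison across the displacement $x-z$ (also giving $[u]_{C^\alpha}\,r^{\alpha-2s}$), and add. The only cosmetic difference is that you work with the symmetrized second-difference form \eqref{2.2} while the paper uses the single-difference form \eqref{2.1}, which is harmless here since $\alpha \le 1$ forces $s < 1/2$ and the single-difference integral is already absolutely convergent.
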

\begin{proof}
	We use \eqref{2.1} to compute
	\begin{equation*}
			\begin{split}
					\left|(-\Delta)^su(x)-(-\Delta)^su(y)\right|&=c_{n,s}\left|\int_{\R^n}\frac{u(x)-u(x+z)-u(y)+u(y+z)}{|z|^{n+2s}}\,dz\right|\\
					&\le I_1+I_2,
				\end{split}
		\end{equation*}
where $I_1$ is the integral over a ball of radius $r$, and $I_2$ is the integral over $\R^n\setminus B_r$. Since $|u(x)-u(x+z)|\le [u]_{C^{\alpha}}|z|^\alpha$ and $|u(y)-u(y+z)|\le [u]_{C^{\alpha}}|z|^\alpha$, we estimate
$$
I_1\le c_{n,s}\left|\int_{B_r}\frac{2[u]_{C^{\alpha}}|z|^\alpha}{|z|^{n+2s}}\,dz\right|\le C[u]_{C^{\alpha}}r^{\alpha-2s}.
$$
To estimate $I_2$, we make use of $|u(x+z)-u(y+z)|\le[u]_{C^{\alpha}}|x-y|^\alpha$ to write
$$
I_2\le c_{n,s}\left|\int_{\R^n\setminus B_r}\frac{2[u]_{C^{\alpha}}|x-y|^\alpha}{|z|^{n+2s}}\,dz\right|\le C[u]_{C^{\alpha}}r^{-2s}|x-y|^\alpha.
$$
Thus, taking $r=|x-y|$, we obtain
$$
\left|(-\Delta)^su(x)-(-\Delta)^su(y)\right|\le C[u]_{C^{\alpha}}|x-y|^{\alpha-2s}.
$$
\end{proof}
\begin{corollary}\label{c3.1}
	If $u\in C^{1,\alpha}(\R^n)$ for $\alpha\in(2s,1]$, then $(-\Delta)^su\in C^{1,\alpha-2s}(\R^n)$. Moreover,
	$$
	\left[(-\Delta)^su\right]_{C^{1,\alpha-2s}}\le C[u]_{C^{1,\alpha}},
	$$
	where $C>0$ is a constant depending only on $\alpha$, $s$ and $n$.
\end{corollary}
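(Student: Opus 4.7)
The plan is to reduce the corollary to Lemma \ref{l4.1} via the commutation identity
$$D_i (-\Delta)^s u = (-\Delta)^s (D_i u), \qquad i = 1, \dots, n.$$
Once this is in hand, the argument is immediate: $u \in C^{1,\alpha}(\R^n)$ yields $D_i u \in C^\alpha(\R^n)$ with $[D_i u]_{C^\alpha} \le [u]_{C^{1,\alpha}}$ by the definition of the $C^{1,\alpha}$ semi-norm, so applying Lemma \ref{l4.1} to $v := D_i u$ gives $(-\Delta)^s v \in C^{\alpha-2s}(\R^n)$ together with $[(-\Delta)^s v]_{C^{\alpha-2s}} \le C[v]_{C^\alpha} \le C[u]_{C^{1,\alpha}}$. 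Taking the maximum over $i$ and recalling that $[w]_{C^{1,\alpha-2s}} = \max_{|\gamma|=1}[D^\gamma w]_{C^{\alpha-2s}}$ then delivers the stated estimate.

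To establish the commutation I would differentiate under the integral sign in the symmetric representation \eqref{2.2},
$$(-\Delta)^s u(x) = \frac{c_{n,s}}{2} \int_{\R^n} \frac{2u(x) - u(x+y) - u(x-y)}{|y|^{n+2s}}\, dy,$$
whose integrand is already singularity-free at $y=0$. The formal $\partial_{x_i}$-derivative of the integrand is
$$G(x, y) := \frac{2D_i u(x) - D_i u(x+y) - D_i u(x-y)}{|y|^{n+2s}},$$
and the task is to produce a single $y$-integrable function that dominates both $G(x,y)$ and the difference quotients in $x_i$ uniformly for small $h$. Near the origin, the bound $|D_i u(z) - D_i u(w)| \le [u]_{C^{1,\alpha}}|z-w|^\alpha$ combined with the mean value theorem yields a majorant of order $|y|^{\alpha - n - 2s}$, which is integrable precisely because $\alpha > 2s$; far from the origin, boundedness of $u$ and $Du$ (both controlled by $\|u\|_{C^{1,\alpha}}$) provides a majorant of order $|y|^{-(n+2s)}$, integrable at infinity. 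Dominated convergence then gives
$$D_i (-\Delta)^s u(x) = \frac{c_{n,s}}{2} \int_{\R^n} G(x,y)\, dy = (-\Delta)^s (D_i u)(x),$$
where the last equality is \eqref{2.2} applied to $D_i u$; this is legitimate since $D_i u$ belongs to $C^\alpha$ with $\alpha > 2s$, which is stronger than the reduced regularity hypothesis $C^{2s+\varepsilon}$ discussed right after Definition \ref{d2.1}.

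The main obstacle is the bookkeeping behind the dominated convergence step: one must carefully split the integration domain into $\{|y| \le 1\}$ and $\{|y| > 1\}$ and apply the mean value theorem to the three terms $u(x)$ and $u(x \pm y)$, verifying that all resulting bounds involve only norms already under control. Once this technical point is dispatched, the corollary is just Lemma \ref{l4.1} applied coordinate by coordinate.
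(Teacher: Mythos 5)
Your proposal is correct and takes essentially the same route as the paper: the paper's proof of this corollary is a single sentence invoking Lemma \ref{l4.1} together with the fact that the fractional Laplacian commutes with differentiation, which is precisely your reduction. You have simply supplied the justification for the commutation identity (differentiation under the integral sign in the representation \eqref{2.2}, with dominated convergence valid since $\alpha>2s$ makes the near-origin majorant $|y|^{\alpha-n-2s}$ integrable), which the paper leaves as a cited fact.
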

\begin{proof}
	This follows from Lemma \ref{l4.1} combined with the fact that the fractional Laplacian commutes with differentiation.
\end{proof}
\begin{lemma}\label{l4.2}
	If $u\in C^{1,\alpha}(\R^n)$ for $\alpha\in(0,2s)$, then $(-\Delta)^su\in C^{\alpha-2s+1}(\R^n)$. Moreover,
	$$
	[u]_{C^{\alpha-2s+1}}\le C[u]_{C^{1,\alpha}},
	$$
	where $C>0$ is a positive constant depending only on $\alpha$, $s$ and $n$.
\end{lemma}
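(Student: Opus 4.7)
The plan is to mimic Lemma \ref{l4.1} but to extract finer cancellation from the symmetric form given in Definition \ref{d2.1},
\begin{equation*}
(-\Delta)^s u(x) = \frac{c_{n,s}}{2} \int_{\R^n} \frac{2u(x) - u(x+z) - u(x-z)}{|z|^{n+2s}}\,dz,
\end{equation*}
because the crude bound $|u(x) - u(x+z)| \le [u]_{C^{1,\alpha}} |z|$ used in the previous lemma leaves an integral that diverges at the origin when $\alpha < 2s$. First I would introduce $\Delta_z u(x) := 2u(x) - u(x+z) - u(x-z)$ and, using $\Delta_z u(x) = \int_0^1 [Du(x+tz) - Du(x-tz)] \cdot z\,dt$ together with the $C^\alpha$ regularity of $Du$, establish the near-origin bound $|\Delta_z u(x)| \le C [u]_{C^{1,\alpha}} |z|^{1+\alpha}$. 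This upgraded exponent (compared to the $|z|$ estimate one gets from the $C^1$ bound alone) is exactly what compensates for the singularity $|z|^{-n-2s}$ when $\alpha < 2s$.

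Next I would write the difference
\begin{equation*}
(-\Delta)^s u(x) - (-\Delta)^s u(y) = \frac{c_{n,s}}{2} \int_{\R^n} \frac{\Delta_z u(x) - \Delta_z u(y)}{|z|^{n+2s}}\,dz,
\end{equation*}
and split the domain at radius $r := |x-y|$. On $B_r$ I would bound $\Delta_z u(x)$ and $\Delta_z u(y)$ separately by the near-origin estimate above, giving a contribution of order $[u]_{C^{1,\alpha}} r^{1+\alpha-2s}$ after integration. On $\R^n \setminus B_r$ the strategy is different: here I would instead control $\Delta_z u(x) - \Delta_z u(y)$ by writing, for each of the four pieces, $u(x\pm z) - u(y\pm z) - u(x) + u(y) = \int_0^1 [Du(\xi_t \pm z) - Du(\xi_t)] \cdot (x-y)\,dt$ along the segment $\xi_t = y + t(x-y)$, yielding $|\Delta_z u(x) - \Delta_z u(y)| \le C [u]_{C^{1,\alpha}} |x-y| \cdot |z|^\alpha$. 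Integrating and using the assumption $\alpha < 2s$ to ensure convergence at infinity produces a contribution of order $[u]_{C^{1,\alpha}} |x-y| \cdot r^{\alpha-2s}$.

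Since both contributions are equal to $[u]_{C^{1,\alpha}} r^{1+\alpha-2s}$ when $r = |x-y|$, summing gives the desired estimate $|(-\Delta)^s u(x) - (-\Delta)^s u(y)| \le C [u]_{C^{1,\alpha}} |x-y|^{1+\alpha-2s}$, i.e.\ $[(-\Delta)^s u]_{C^{1+\alpha-2s}} \le C [u]_{C^{1,\alpha}}$. The main obstacle is keeping the two regimes honest: one must verify that the cancellation from symmetry (which upgrades $|z|$ to $|z|^{1+\alpha}$ near $0$) and the mean-value formulation along the segment $\xi_t$ (which extracts the extra factor $|x-y|$ at infinity) are both sharp enough that the split is balanced by a single choice of $r$, and that all constants that appear depend only on $\alpha$, $s$, and $n$ (in particular a factor $(2s-\alpha)^{-1}$ from the far integral, which is finite precisely by the hypothesis $\alpha < 2s$).
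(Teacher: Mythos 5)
Your proof is correct, but it takes a genuinely different route from the paper's. The paper works with the \emph{non-symmetric} first-difference representation \eqref{2.1}, bounds
\[
|u(x)-u(x+z)-u(y)+u(y+z)|\le\bigl(|x-y|^\alpha|z|+|z|^{1+\alpha}\bigr)[u]_{C^{1,\alpha}},
\]
and integrates this against $|z|^{-n-2s}$ on $B_r$; the term $|x-y|^\alpha|z|$ produces $\int_0^r\rho^{-2s}\,d\rho$, which converges only when $s<1/2$. Consequently, the paper restricts this direct estimate to $s<1/2$ and, for $s\ge1/2$, falls back on the semi-group property (Proposition \ref{p3.1}), writing $(-\Delta)^s=(-\Delta)^{s-1/2}(-\Delta)^{1/2}$ and using $(-\Delta)^{1/2}u=\sum_iR_iD_iu$ with $R_i$ the Riesz transforms, then reducing to Lemma \ref{l4.1}. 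You instead use the \emph{symmetric} second-difference representation \eqref{2.2} throughout: the cancellation in $\Delta_zu(x):=2u(x)-u(x+z)-u(x-z)$ upgrades the near-origin bound to $|z|^{1+\alpha}$, so the $B_r$ integral converges for all $s\in(0,1)$ (under the implicit hypothesis $\alpha>2s-1$, needed anyway for $\alpha-2s+1>0$), and a single split at $r=|x-y|$ closes the estimate with no case distinction in $s$. Your approach is more elementary and self-contained --- it avoids the semi-group lemma and the Riesz-transform boundedness on H\"older spaces entirely --- while the paper's route illustrates the semi-group property in action. One small point you might make explicit: the convergence of the near-origin integral $\int_{B_r}|z|^{1+\alpha-n-2s}\,dz$ requires $\alpha>2s-1$, which is implicit in the statement (otherwise the target H\"older exponent $\alpha-2s+1$ would be nonpositive) but worth flagging.
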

\begin{proof}
	If $s<1/2$, we argue as in the proof of Lemma \ref{l4.1} to get
	$$
	\left|(-\Delta)^su(x)-(-\Delta)^su(y)\right|\le I_1+I_2,
	$$
	with the same $I_1$ and $I_2$ as in the proof of Lemma \ref{l4.1}. As $u\in C^{1,\alpha}(\R^n)$, we can estimate
	\begin{equation*}
			\begin{split}
					|u(x)-u(x+z)-u(y)+u(y+z)|&\le|(Du(x)-Du(y))\cdot z|+[u]_{C^{1,\alpha}}|z|^{1+\alpha}\\
					&=\left(|x-y|^\alpha|z|+|z|^{1+\alpha}\right)[u]_{C^{1,\alpha}}, 
				\end{split}
		\end{equation*}
therefore 
$$
I_1\le C\left(|x-y|^\alpha r^{1-2s}+r^{1+\alpha-2s}\right)[u]_{C^{1,\alpha}},
$$
and as before, taking $r=|x-y|$ gives the desired result.

For the case of $s\ge1/2$, using Proposition \ref{p3.1}, we can decompose 
$$
(-\Delta)^su=(-\Delta)^{s-1/2}(-\Delta)^{1/2}u
$$
and observe that 
$$
(-\Delta)^{1/2}u=\sum_{i=1}^nR_iD_i,
$$
where $R_i$ is the $i$-th Riesz transform (see, for example, \cite[Section 6]{G19}).
\end{proof}
An iteration of the last two lemmas leads to the following result.
\begin{lemma}\label{l4.3}
	If $u\in C^{k,\alpha}$ and $k+\alpha-2s$ is not an integer, then $(-\Delta)^su\in C^{\beta,\gamma}$, where $\beta$ is the integer part of $k+\alpha-2s$ and $\gamma=k+\alpha-2s-\beta$.
\end{lemma}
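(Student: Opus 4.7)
The plan is to combine two tools: (i) $(-\Delta)^s$ commutes with every partial derivative $D^\mu$, which follows from the Fourier characterization \eqref{Fourier} since $D^\mu$ and $(-\Delta)^s$ are Fourier multipliers by $(2\pi i\xi)^\mu$ and $(2\pi|\xi|)^{2s}$ respectively (and the identity extends from $\mathcal{S}$ to $C^{k,\alpha}$ by differentiating under the integral in \eqref{2.2}); and (ii) Lemmas \ref{l4.1} and \ref{l4.2}, which handle a single-step regularity loss of order $2s$. When the parameter $s$ is large enough to force an additional reduction, I would iterate via the semi-group identity from Proposition \ref{p3.1}.

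Writing $k+\alpha-2s=\beta+\gamma$ with $\beta\in\mathbb{N}_0$ and $\gamma\in(0,1)$, showing $(-\Delta)^s u\in C^{\beta,\gamma}$ reduces, via the commutation $D^\mu(-\Delta)^s u=(-\Delta)^s D^\mu u$, to proving $(-\Delta)^s D^\mu u\in C^{0,\gamma}$ for every $|\mu|=\beta$. Since $D^\mu u\in C^{k-\beta,\alpha}$ and since $\alpha\in(0,1]$ together with $2s\in(0,2)$ forces $\alpha-2s\in(-2,1)$, only $k-\beta\in\{0,1,2\}$ can occur.

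If $k-\beta=0$ then $\alpha>2s$ and $\gamma=\alpha-2s$, so Lemma \ref{l4.1} applied to $D^\mu u\in C^{0,\alpha}$ yields the claim. If $k-\beta=1$ then $\alpha\in(2s-1,2s)$ and $\gamma=\alpha-2s+1$, and Lemma \ref{l4.2} applied to $D^\mu u\in C^{1,\alpha}$ yields the claim.

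The delicate case is $k-\beta=2$, which forces $s>1/2$ and $\alpha\in(0,2s-1)$ with $\gamma=\alpha-2s+2$. Here a direct single application of Lemma \ref{l4.2} would output the exponent $\alpha-2s+1<0$, so instead I would use Proposition \ref{p3.1} to factor $(-\Delta)^s=(-\Delta)^{s/2}(-\Delta)^{s/2}$, where $s/2<1/2$. Since $\alpha<2s-1<s$, Lemma \ref{l4.2} with order $s/2$ applied to $D_iD^\mu u\in C^{1,\alpha}$ gives $(-\Delta)^{s/2}D_iD^\mu u\in C^{\alpha-s+1}$; commuting $D_i$ back yields $(-\Delta)^{s/2}D^\mu u\in C^{1,\alpha-s+1}$. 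The new exponent lies in $(1-s,s)\subset(0,s)$, so a second application of Lemma \ref{l4.2} with order $s/2$ produces $(-\Delta)^s D^\mu u\in C^{\alpha-2s+2}=C^\gamma$, as required. The main obstacle is exactly this third case: one must confirm that the intermediate Hölder exponent $\alpha-s+1$ lands in the admissible window $(0,s)$ for the second application of Lemma \ref{l4.2}, which is guaranteed by $s<1$ together with $\alpha<2s-1$, and that no intermediate exponent becomes integral, which is ensured by the hypothesis $k+\alpha-2s\notin\mathbb{Z}$.
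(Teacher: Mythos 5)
Your proposal is correct, and since the paper offers no proof of Lemma \ref{l4.3} beyond the one-line hint ``an iteration of the last two lemmas,'' your argument is essentially the intended one, carried out carefully. The case split $k-\beta\in\{0,1,2\}$ is exactly right, and the reduction via commutation $D^\mu(-\Delta)^s=(-\Delta)^sD^\mu$ is the natural mechanism for the iteration. Two remarks are worth making. First, you correctly identify that the case $k-\beta=2$ (i.e.\ $s>1/2$ and $\alpha<2s-1$) cannot be handled by a single application of Lemma \ref{l4.2}, whose stated conclusion $C^{\alpha-2s+1}$ becomes vacuous when $\alpha-2s+1<0$; this is a genuine gap in a literal reading of ``iterate Lemmas \ref{l4.1} and \ref{l4.2},'' and your semigroup factorization $(-\Delta)^s=(-\Delta)^{s/2}(-\Delta)^{s/2}$ with $s/2<1/2$ (mirroring the device already used inside the proof of Lemma \ref{l4.2} for $s\ge1/2$) is the right fix; the verification $\alpha-s+1\in(1-s,s)\subset(0,s)$ is exactly what makes the second application admissible. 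Second, the two extensions you invoke without proof --- commutation of $(-\Delta)^s$ with derivatives and the semigroup identity of Proposition \ref{p3.1} beyond Schwartz class --- are standard but should at minimum be flagged as requiring the same ``differentiate under the integral in \eqref{2.2}'' justification you gesture at; as long as that is understood, the argument is complete.
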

\begin{remark}\label{regularityremark}
Schauder type estimates hold for the fractional Laplacian. In fact, if $(-\Delta)^su\in C^\alpha(B_1)\cap C(\overline{B}_1)$, then $u\in C^{\alpha+2s}(B_{1/2})$, \cite[Theorem 1.2]{BK17}. Actually, a more general estimate holds,
\begin{equation*}
	\|u\|_{C^{\alpha+2s}(B_{1/2})}\le C\left[\|(-\Delta)^su\|_{C^\alpha(B_1)}+\|u\|_{L^\infty(B_1)}+\int_{\R^n\setminus B_1}\frac{u(y)}{|y|^{n+2s}}\,dy\right],
\end{equation*}
for any $\alpha\ge0$ such that $\alpha+2s$ is not an integer, as long as the terms in the right hand side are well defined.
\end{remark}

\section{Green's function for the ball}\label{s10}
As in the case of the classical Laplacian, the notions of fundamental solution and Poisson kernel allow one to define the Green function. For $x$, $z\in B_r$ and $x\neq z$, the Green function is defined in the following way,
$$
G(x,z):=\phi(x-z)-\int_{\R^n\setminus B_r}\phi(z-y)P_r(y,x)\,dy,
$$
where $\phi$ is the fundamental solution defined by \eqref{fundamental}, and $P_r$ is the Poisson kernel defined by \eqref{poissonkernel}. It can be displayed in a more explicit way, \cite[Theorem 3.1]{B16},
\begin{equation}\label{Greenfunction}
G(x,z)=\kappa_{n,s}|z-x|^{2s-n}\int_0^{R(x,z)}\frac{t^{s-1}}{(t+1)^{\frac{n}{2}}}\,dt,
\end{equation}
where
$$
R(x,y):=\frac{(r^2-|x|^2)(r^2-|z|^2)}{r^2|x-z|^2},\,\,\,\textit{ if }\,\,\,n\ge2,
$$
and
\begin{equation}\label{Greenfunction1}
G(x,z)=\frac{1}{\pi}\log\left(\frac{r^2-xz+\sqrt{(r^2-x^2)(r^2-z^2)}}{r|z-x|}\right),\,\,\,\textit{ if }\,\,\,n=1,
\end{equation}
with
$$
\kappa_{n,s}:=\frac{\Gamma\left(\frac{n}{2}\right)}{4^s\pi^{\frac{n}{2}}\Gamma^2(s)}.
$$
The proof can be found in the celebrated work of Riesz, \cite{R38} (see also \cite{BGR61,G19} and \cite[Theorem 3.2]{B16}).
\begin{theorem}\label{dirichletproblemnonhom}
	If $f\in C^{2s+\varepsilon}(B_r)\cap C(\overline{B}_r)$, then the unique solution of
	\begin{equation*} 
		\begin{cases}
			(-\Delta)^{s} u=f&\text{in }\,\,\,B_r,\\	
			u=0&\text{in }\,\,\,\R^n\setminus B_r
		\end{cases} 
	\end{equation*}
	is given explicitly in terms of the Green function by
	$$
	u(x)=\int_{\R^n\setminus B_r}G(x,y)f(y)\,dy,\quad x\in B_r.
	$$
\end{theorem}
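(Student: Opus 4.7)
The plan is to verify that the candidate $u$ given by the Green's function formula solves the problem, and to reduce uniqueness to the comparison principle.

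\textbf{Uniqueness.} If $u_1$ and $u_2$ both solve the problem, then $w := u_1 - u_2$ is $s$-harmonic in $B_r$ and vanishes on $\R^n \setminus B_r$; applying Corollary \ref{c3.1} to $w$ and to $-w$ forces $w \equiv 0$.

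\textbf{Existence.} I would first isolate two properties of $G$ that carry the whole argument. Writing $G(x,y) = \phi(x-y) - h(x,y)$ with
$$
h(x,y) := \int_{\R^n \setminus B_r} \phi(y-z)\,P_r(z,x)\,dz,
$$
Theorem \ref{dirichletproblemhom} applied in the variable $x$, for each fixed $y \in B_r$, with continuous exterior datum $z \mapsto \phi(z-y)$, identifies $h(\cdot,y)$ as the unique $s$-harmonic function in $B_r$ that agrees with $\phi(\cdot - y)$ on $\R^n \setminus B_r$. Extending $G$ by zero outside $B_r$ is therefore consistent with the identity $G(x,y) = \phi(x-y) - h(x,y)$ for all $x \in \R^n$, and yields \textit{(a)} $G(x,y) = 0$ whenever $x \in \R^n \setminus B_r$, $y \in B_r$, and \textit{(b)} $(-\Delta)^s_x G(\cdot, y) = \delta_y$ in $B_r$ in the distributional sense, since $(-\Delta)^s \phi(\cdot - y) = \delta_y$ and $h(\cdot,y)$ is $s$-harmonic in $B_r$.

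With these in hand, set $u(x) := \int_{B_r} G(x,y) f(y)\, dy$. Property \textit{(a)} immediately gives $u \equiv 0$ on $\R^n \setminus B_r$. For the equation, I would test $u$ against an arbitrary $\varphi \in C_c^\infty(B_r)$, use the integration-by-parts identity \eqref{integrationbyparts} (extended past $\mathcal{S} \times \mathcal{S}$ by approximation and the explicit decay of $G(\cdot,y)$), exchange the order of integration by Fubini, and apply \textit{(b)}:
\begin{align*}
\int_{\R^n} u(x)(-\Delta)^s\varphi(x)\, dx
&= \int_{B_r} f(y)\int_{\R^n} G(x,y)(-\Delta)^s\varphi(x)\,dx\,dy \\
&= \int_{B_r} f(y)\,\varphi(y)\, dy.
\end{align*}
Thus $(-\Delta)^s u = f$ distributionally in $B_r$. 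Since $f \in C^{2s+\varepsilon}(B_r)$, the Schauder estimate of Remark \ref{regularityremark} upgrades $u$ to the regularity needed for $(-\Delta)^s u$ to exist pointwise as a continuous function, at which point the distributional identity becomes the classical one.

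\textbf{Main obstacle.} The delicate step is the pairing $\int G(x,y)(-\Delta)^s\varphi(x)\,dx = \varphi(y)$ together with the associated Fubini interchange: both hinge on the Riesz-potential singularity of $\phi(x-y)$ at $x=y$. I would treat this by excising a small ball $B_\delta(y)$ from the integral, applying the clean $\mathcal{S}$-version of \eqref{integrationbyparts} on the complement (where $G(\cdot,y)$ is smooth and the calculation of Proposition~\ref{p2.1} applies), and passing to the limit $\delta \to 0$; the singular contribution is controlled exactly as in the removal-of-singularity argument leading to \eqref{2.2}, using the $C^{2s+\varepsilon}$ regularity of $\varphi$ to dominate the integrand. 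A secondary but routine point is that $u$ attains its exterior value continuously at $\partial B_r$, which follows from the boundary behaviour of $G$ visible in the explicit formula \eqref{Greenfunction}.
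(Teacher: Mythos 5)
The paper itself does not give a proof of this theorem; it simply records the statement and points to Riesz \cite{R38}, \cite{BGR61}, and Bucur \cite{B16} for the derivation of the Green function and the representation. So your proposal is not competing with an in-text argument, but filling a gap the paper leaves to the references. Your outline is the standard potential-theoretic route and it is essentially correct. Two remarks are worth making.

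First, you have (correctly, though silently) replaced the domain of integration. The statement as printed has $u(x)=\int_{\R^n\setminus B_r}G(x,y)f(y)\,dy$, but $f$ is only prescribed on $B_r$ and property \textit{(a)} that you establish gives $G(x,y)=0$ for $y\notin B_r$ anyway, so the integral must be over $B_r$. This is a typo in the paper; it is better to flag it explicitly than to change it without comment.

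Second, the logical skeleton is sound: uniqueness from the comparison principle, property \textit{(a)} from the boundary condition in Theorem \ref{dirichletproblemhom}, property \textit{(b)} from $(-\Delta)^s\phi(\cdot-y)=\delta_y$ together with $s$-harmonicity of the corrector $h(\cdot,y)$, then the Fubini and distributional-pairing computation. The place where real work remains, as you say, is justifying $\int_{\R^n}G(x,y)(-\Delta)^s\varphi(x)\,dx=\varphi(y)$ when $G(\cdot,y)$ is neither Schwartz nor $C^2$ at $y$: \eqref{integrationbyparts} is stated only for $\mathcal{S}\times\mathcal{S}$, and the distributional identity \eqref{distribution} alone does not tell you how $(-\Delta)^sG(\cdot,y)$ acts. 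Your plan to excise $B_\delta(y)$, use smoothness of $G(\cdot,y)$ on the complement, and control the singular piece by the same estimate that removes the P.V.\ in \eqref{2.2} is the right device, but as written it is a promise rather than a proof; in particular one still needs the tail estimate on $G(\cdot,y)$ near $\partial B_r$ (visible in \eqref{Greenfunction}) to justify the Fubini interchange and the continuity of $u$ up to the boundary. Likewise, invoking the Schauder estimate of Remark \ref{regularityremark} to upgrade the distributional solution to a classical one requires knowing a priori that $u$ lies in the class where that estimate applies ($u\in L^\infty$, the tail integral finite); this is true here because $G$ is integrable and $f$ bounded, but it should be said. None of these gaps is conceptual, and your approach would close into a complete proof along exactly the lines Bucur carries out in \cite[Theorem 3.2]{B16}.
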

As a consequence of Theorems \ref{dirichletproblemhom} and \ref{dirichletproblemnonhom} we obtain an explicit representation of the solution of the Dirichlet problem in the ball of radius $r>0$.
\begin{theorem}\label{dirichletproblem}
	If $f\in C^{2s+\varepsilon}(B_r)\cap C(\overline{B}_r)$ and $g\in L_s^1(\R^n)\cap C(\R^n)$, then the unique solution of the problem 
	\begin{equation*} 
		\begin{cases}
			(-\Delta)^{s} u=f&\text{in }\,\,\,B_r,\\	
			u=g&\text{in }\,\,\,\R^n\setminus B_r
		\end{cases} 
	\end{equation*}
is given by
$$
u(x)=\int_{\R^n\setminus B_r}g(y)P_r(x,y)\,dy+\int_{\R^n\setminus B_r}G(x,y)f(y)\,dy,\,\,x\in B_r,
$$
where $P_r$ and $G$ are defined by \eqref{poissonkernel} and \eqref{Greenfunction}-\eqref{Greenfunction1} respectively.
\end{theorem}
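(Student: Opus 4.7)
The plan is to use linearity of the fractional Laplacian to decompose the problem into the two pieces already solved in Theorems \ref{dirichletproblemhom} and \ref{dirichletproblemnonhom}, then invoke the comparison principle for uniqueness.

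More precisely, I would first set
$$
u_1(x):=\int_{\R^n\setminus B_r}g(y)P_r(x,y)\,dy\quad\text{and}\quad u_2(x):=\int_{B_r}G(x,y)f(y)\,dy,
$$
where I read the non-homogeneous representation of Theorem \ref{dirichletproblemnonhom} with the natural domain of integration (the source $f$ lives in $B_r$). By Theorem \ref{dirichletproblemhom}, $u_1$ satisfies $(-\Delta)^s u_1=0$ in $B_r$ with $u_1=g$ in $\R^n\setminus B_r$, and by Theorem \ref{dirichletproblemnonhom}, $u_2$ satisfies $(-\Delta)^s u_2=f$ in $B_r$ with $u_2=0$ in $\R^n\setminus B_r$. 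Since $(-\Delta)^s$ is linear (as noted at the beginning of Section \ref{s3}), the candidate $u:=u_1+u_2$ satisfies
$$
(-\Delta)^s u=(-\Delta)^s u_1+(-\Delta)^s u_2=0+f=f\quad\text{in }B_r,
$$
and $u=g+0=g$ in $\R^n\setminus B_r$. This gives existence with the announced representation formula.

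For uniqueness, I would argue by contradiction in the standard way: if $u$ and $\tilde u$ were two solutions in the class $L_s^1(\R^n)\cap C_{\loc}^{2s+\varepsilon}(B_r)\cap C(\R^n)$, then $w:=u-\tilde u$ would satisfy $(-\Delta)^s w=0$ in $B_r$ and $w=0$ in $\R^n\setminus B_r$. Applying Corollary \ref{c3.1} (or equivalently Theorem \ref{maximumprinciple}) to both $w$ and $-w$ forces $w\equiv 0$ in $\R^n$. The assumptions $f\in C^{2s+\varepsilon}(B_r)\cap C(\overline B_r)$ and $g\in L_s^1(\R^n)\cap C(\R^n)$ are exactly what is needed so that the comparison principle applies and that the integrals above converge and define a function with enough regularity to evaluate $(-\Delta)^s u$ pointwise in $B_r$.

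The main (minor) obstacle is verifying the right regularity and integrability of $u_1$ and $u_2$ so that the two applications of Theorems \ref{dirichletproblemhom}–\ref{dirichletproblemnonhom} are legitimate and so that $u$ belongs to a class in which the comparison principle can be invoked; in particular, one needs $u\in L_s^1(\R^n)$ (which follows from $g\in L_s^1(\R^n)$ and the boundedness of the Green term on $B_r$) and $u\in C^{2s+\varepsilon}_{\loc}(B_r)$, which is the hypothesis class in which Theorem \ref{dirichletproblemnonhom} and the maximum principle are stated. Once these routine points are checked, the theorem reduces to a clean application of linearity plus the two separate representation formulas.
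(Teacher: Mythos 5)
Your argument is exactly the one the paper intends: the paper gives no explicit proof, simply remarking that the theorem follows ``as a consequence of Theorems~\ref{dirichletproblemhom} and~\ref{dirichletproblemnonhom},'' and your superposition plus uniqueness-via-comparison-principle fills in precisely that gap. You also correctly spotted that the domain of integration in the Green-function term should be $B_r$ rather than $\R^n\setminus B_r$ (the Green function $G(x,z)$ is only defined for $x,z\in B_r$ and $f$ lives on $\overline B_r$), a slip that appears both in Theorem~\ref{dirichletproblemnonhom} and in the statement at hand.
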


\section{Fractional harmonic functions are $C^\infty$}\label{s11}
As it is well known, classical harmonic functions are $C^\infty$. Fractional harmonic functions enjoy the same regularity. This may seem like an obvious observation, as in several definitions above the fractional Laplacian was defined for $C^\infty$ functions, but in fact there is no loss of generality. Namely, even if one starts with the ``weakest'' regularity assumptions, fractional harmonic functions turn out to be $C^\infty$, \cite[Theorem 2.10]{BK17}, \cite[Corollary 1]{S19}.
\begin{theorem}\label{regularitytheorem}
	If $u\in L^\infty(\R^n)\cap C(\R^n\setminus B_r)$ is such that $(-\Delta)^su=0$ in $B_r$, $r>0$, then for any multi-index $\alpha\in\mathbb{N}_0^n$
	$$
	\|D^\alpha u\|_{L^\infty(B_{r/2})}\le Cr^{-|\alpha|}\|u\|_{L^\infty(\R^n\setminus B_r)},
	$$
	where $C>0$ is a constant depending only on $n$, $s$ and $\alpha$.
\end{theorem}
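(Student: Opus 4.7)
The plan is to represent $u$ inside $B_r$ via the Poisson formula from Theorem \ref{dirichletproblemhom} and then differentiate under the integral sign, exploiting the smoothness of $P_r$ in the $x$-variable when $x$ stays away from $\partial B_r$. By scaling it suffices to treat $r=1$: if $v(x):=u(rx)$, then $(-\Delta)^s v=0$ in $B_1$, $\|v\|_{L^\infty(\R^n\setminus B_1)}=\|u\|_{L^\infty(\R^n\setminus B_r)}$, and $\|D^\alpha u\|_{L^\infty(B_{r/2})}=r^{-|\alpha|}\|D^\alpha v\|_{L^\infty(B_{1/2})}$, so the general estimate follows from the unit case. Applying Theorem \ref{dirichletproblemhom} with exterior datum $g:=v|_{\R^n\setminus B_1}$ gives
\[
v(x)=\int_{\R^n\setminus B_1}v(y)\,P_1(x,y)\,dy,\qquad x\in B_1.
\]
If $v$ fails to be continuous across $\partial B_1$ so that Theorem \ref{dirichletproblemhom} does not apply verbatim, one compares $v$ with the Poisson integral of its exterior data: both are $s$-harmonic in $B_1$ and agree outside, hence coincide by the maximum principle (Theorem \ref{maximumprinciple}).

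Next I estimate the $x$-derivatives of the kernel. For $x\in B_{1/2}$ and $y\in\R^n\setminus B_1$ one has $1-|x|^2\ge 3/4$ and $|y-x|\ge|y|-|x|\ge 1/2$, so
\[
P_1(x,y)=C_{n,s}(1-|x|^2)^s(|y|^2-1)^{-s}|y-x|^{-n}
\]
is $C^\infty$ in $x$. Leibniz's rule, together with the bounds $\bigl|D^\beta(1-|x|^2)^s\bigr|\le C_\beta$ on $B_{1/2}$ and $\bigl|D^\gamma|y-x|^{-n}\bigr|\le C_\gamma|y-x|^{-n-|\gamma|}$, yields
\[
|D^\alpha_x P_1(x,y)|\le C_\alpha\sum_{|\beta|+|\gamma|=|\alpha|}(|y|^2-1)^{-s}|y-x|^{-n-|\gamma|}.
\]
This bound is integrable in $y$: near $\partial B_1$ the factor $(|y|^2-1)^{-s}$ is integrable because $s<1$, while $|y-x|^{-n-|\gamma|}$ is bounded; for $|y|\ge 2$ one has $|y-x|\sim|y|$ and the integrand decays like $|y|^{-n-2s-|\gamma|}$, which is integrable at infinity because $s>0$. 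Dominated convergence then justifies differentiating under the integral sign, yielding $|D^\alpha v(x)|\le C_\alpha\|v\|_{L^\infty(\R^n\setminus B_1)}$ on $B_{1/2}$, and rescaling produces the desired $r^{-|\alpha|}$ dependence.

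The main obstacle is the combined boundary/infinity integrability of the kernel derivative bound: the hypothesis $s\in(0,1)$ sits in a delicate balance, with $s<1$ keeping the boundary singularity $(|y|^2-1)^{-s}$ integrable and $s>0$ providing just enough decay at infinity to absorb the worsening factor $|y-x|^{-n-|\alpha|}$ coming from differentiation. Once these two ends are checked, the rest is bookkeeping: the $|\gamma|<|\alpha|$ terms in Leibniz's rule are strictly better behaved, and the maximum-principle patch for the representation formula is a one-line argument. The elementary scaling transfers the estimate from $r=1$ to general $r>0$.
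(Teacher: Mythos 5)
Your proposal follows essentially the same route as the paper: reduce to $r=1$ by scaling, represent $u$ in $B_1$ via the Poisson kernel $P_1(x,y)$ from Theorem \ref{dirichletproblemhom}, and differentiate under the integral, exploiting that for $x\in B_{1/2}$ and $y\in\R^n\setminus B_1$ one has $1-|x|^2\ge 3/4$ and $|y-x|\ge |y|/2$, so the kernel is smooth in $x$ uniformly in $y$. The paper carries out the first-derivative estimate explicitly and then says the higher-order bound follows by ``reiterating the computation''; your Leibniz-rule bookkeeping makes that iteration precise, and your remark that the boundary factor $(|y|^2-1)^{-s}$ needs $s<1$ while the tail needs $s>0$ is exactly the balance the paper's computation rests on. One small caveat: your claim that the $|\gamma|<|\alpha|$ Leibniz terms are ``strictly better behaved'' is slightly off — at infinity the $|\gamma|=0$ term decays slowest, $\sim|y|^{-n-2s}$ — but since every term is integrable this doesn't affect the conclusion. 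You also flag a genuine technical point the paper leaves implicit: $u$ is only assumed continuous on $\R^n\setminus B_r$, whereas Theorem \ref{dirichletproblemhom} is stated for $g\in C(\R^n)$; your patch via the comparison principle (identifying $u$ with the Poisson integral of its exterior trace because both are $s$-harmonic in $B_1$ with the same exterior data) is the right way to close that gap.
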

\begin{proof}
	This follows from the smoothness of the Poisson kernel \eqref{poissonkernel}. Observe that without loss of generality we may assume $r=1$. Indeed, if 
	\begin{equation}\label{4.4}
	\|D^\alpha u\|_{L^\infty(B_{1/2})}\le C\|u\|_{L^\infty(\R^n\setminus B_1)},
	\end{equation}
	then by rescaling $y:=rx$, $v(y):=u(x)$, $x\in B_1$, one has $D^\alpha u(x)=r^{|\alpha|}|D^\alpha v(y)|$, which yields,
	\begin{equation*}		
			r^{|\alpha|}|D^\alpha v(y)|=|D^\alpha u(x)|\le C\|u\|_{L^\infty(\R^n\setminus B_1)}=C\|v\|_{L^\infty(\R^n\setminus B_r)},
	\end{equation*}	
	and the result follows. To prove \eqref{4.4}, note that from \eqref{poissonkernel} and Theorem \ref{dirichletproblemhom}, we have
	$$
	u(x)=\int_{\R^n\setminus B_1}u(y)P_1(x,y)\,dy= C_{n,s}\int_{\R^n\setminus B_1}u(y)\left(\frac{1-|x|^2}{|y|^2-1}\right)^s\frac{dy}{|y-x|^n}.
	$$
	Hence
	\begin{equation*}
		\begin{split}
	D_iu(x)=&2s\int_{\R^n\setminus B_1}\frac{u(y)}{(|y|^2-1)^s}\frac{x_i(1-|x|^2)^{s-1}}{|x-y|^n}\,dy\\
	&-\int_{\R^n\setminus B_1}\frac{u(y)}{(|y|^2-1)^s}\frac{n(1-|x|^2)^s(x_i-y_i)}{|x-y|^{n+2}}\,dy,
		\end{split}
	\end{equation*}
therefore,
\begin{equation}\label{4.5}
|Du(x)|\le C\int_{\R^n\setminus B_1}\frac{|u(y)|}{(|y|^2-1)^s}\left[\frac{|x|(1-|x|^2)^{s-1}}{|x-y|^n}+\frac{(1-|x|^2)^s}{|x-y|^{n+1}}\right]\,dy,
\end{equation}
where $C>0$ is a constant depending on $s$ and $n$. On the other hand, if $|x|\le\frac{1}{2}$, then 
$$
\frac{3}{4}\le1-|x|^2\le1\,\,\,\textrm{ and }\,\,\,|x-y|\ge\frac{|y|}{2},
$$
which combined with \eqref{4.5} and passing to polar coordinates, yields
\begin{equation*}
	\begin{split}
|Du(x)|&\le C\|u\|_{L^\infty(\R^n\setminus B_1)}\int_{\R^n\setminus B_1}\left[\frac{1}{(|y|-1)^s|y|^n}+\frac{1}{(|y|-1)^s|y|^{n+1}}\right]\,dy\\
	  &\le C\|u\|_{L^\infty(\R^n\setminus B_1)}\int_1^\infty\left[\frac{1}{(\rho-1)^s\rho}+\frac{1}{(\rho-1)^s\rho^2}\right]\,d\rho\\
	  &\le C\|u\|_{L^\infty(\R^n\setminus B_1)}.
	\end{split}
\end{equation*}
Thus,
$$
|Du(x)|\le C\|u\|_{L^\infty(\R^n\setminus B_1)},\,\,\,\forall x\in B_{1/2}. 
$$
Reiterating the computation, we get \eqref{4.4} for any multi-index $\alpha$.
\end{proof}

\section{Density of fractional harmonic functions}\label{s12}
As we have seen above, fractional harmonic functions share lots of properties with classical harmonic functions. Obviously, there are also several significant differences that come from the non-local nature of the fractional Laplacian. In particular, as we will see below, any given smooth function can be locally approximated by fractional harmonic functions. This striking property, obtained in \cite{DSV17}, shows how faraway oscillations of a fractional harmonic function affect on its local behavior. In other words, fractional harmonic functions are dense in the set of locally smooth functions. There is no local counterpart of this property. Indeed, classical harmonic functions cannot have a strict local maximum, hence, functions with strict local maximum cannot be approximated by harmonic functions. It is noteworthy, that although this is purely non-local phenomenon, but a similar result does not hold for any non-local operator.
\begin{theorem}\label{approximations}
	If  $f\in C^k(\overline{B}_1)$, for $k\in\mathbb{N}$, then for any $\varepsilon>0$, there exists $R>0$ and $u\in H^s(\R^n)\cap C^s(\R^n)$ such that $u$ is fractional $s$-harmonic in $B_1$, vanishes outside of $B_R$
	$$
	\|f-u\|_{C^k(\overline{B}_1)}<\varepsilon.
	$$
\end{theorem}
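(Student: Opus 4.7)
The plan combines three ingredients: a classical Weierstrass reduction to polynomial targets, a rescaling argument that localises the problem to matching a Taylor polynomial at a single point, and a finite-dimensional linear-algebra step showing that the Taylor polynomial at the origin of an $s$-harmonic function with compactly supported exterior data can be prescribed freely.

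First, by the Weierstrass approximation theorem I would approximate $f$ in $C^k(\overline{B}_1)$ by a polynomial $P$, so it suffices to treat $f=P$. Using the homogeneity of $(-\Delta)^s$ noted in Section \ref{s3}, a function $u$ is $s$-harmonic on $B_1$ if and only if $u_\rho(x):=u(\rho x)$ is $s$-harmonic on $B_{1/\rho}$; combining this change of variable with Taylor's theorem applied to $P$ on the ball $B_\rho$ reduces the theorem to the following algebraic claim: for every polynomial $Q$ of degree at most $k$ there exists an $s$-harmonic function $u$ on $B_1$, vanishing outside some $B_R$, whose degree-$k$ Taylor polynomial at the origin equals $Q$ exactly.

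To prove this algebraic claim, let $V_k$ denote the finite-dimensional space of polynomials of degree $\le k$ on $\R^n$ and, for $R>1$, define the linear map
\begin{equation*}
T:C_c^\infty(B_R\setminus\overline{B}_1)\longrightarrow V_k,\qquad T(\varphi)(x):=\sum_{|\alpha|\le k}\frac{D^\alpha u_\varphi(0)}{\alpha!}\,x^\alpha,
\end{equation*}
where $u_\varphi$ is the $s$-harmonic extension produced by Theorem \ref{dirichletproblemhom}. I would establish surjectivity of $T$ by contradiction: if its image is proper, there exist coefficients $c_\alpha$, not all zero, with $\sum_{|\alpha|\le k} c_\alpha D_x^\alpha u_\varphi(0)=0$ for every admissible $\varphi$. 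Differentiating under the Poisson integral forces the kernel $K(y):=\sum_{|\alpha|\le k} c_\alpha D_x^\alpha P_1(0,y)$ to vanish identically on $\R^n\setminus\overline{B}_1$. Via the duality \eqref{integrationbyparts}, this translates, after identifying a suitable tempered distribution $\tilde u$, into $(-\Delta)^s\tilde u = Q$ in the sense of \eqref{distribution}, where $Q(x):=\sum c_\alpha x^\alpha$; the Liouville-type rigidity recorded in Remark \ref{Liouvilleremark} then yields $Q\equiv 0$, contradicting the non-triviality of the $c_\alpha$.

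The hardest step, I expect, is this rigidity transfer from vanishing of $K$ off $\overline{B}_1$ to vanishing of $Q$, because one must carefully control the boundary singularity $(|y|^2-1)^{-s}$ in $P_1$ and exhibit an explicit tempered distribution $\tilde u$ to which Remark \ref{Liouvilleremark} applies. Once surjectivity of $T$ is in hand, choose $u_\varphi$ matching the Taylor polynomial of the rescaled $P$ at the origin; extension by zero outside $B_R$, combined with the interior smoothness supplied by Theorem \ref{regularitytheorem}, places $u$ in $H^s(\R^n)\cap C^s(\R^n)\cap C^\infty(B_{1/2})$. The Taylor remainder estimate then controls the $C^k$-error on the small ball by $O(\rho)$, and undoing the dilation from the first paragraph converts this into the desired $C^k(\overline{B}_1)$-approximation of $f$ within $\varepsilon$.
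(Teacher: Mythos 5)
Your high-level architecture mirrors the paper's: reduce to polynomials via Stone--Weierstrass, rescale so that it suffices to match a Taylor polynomial at a single point, and then run a finite-dimensional linear-algebra argument showing that the Taylor jets of compactly supported, locally $s$-harmonic functions exhaust all polynomials. That much is sound and is essentially the scheme of \cite{DSV17} that the paper also follows.

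However, there is a genuine gap at the crux of the argument, the surjectivity of your map $T$. Having correctly deduced that a nontrivial annihilator $c=(c_\alpha)$ forces $K(y):=\sum_{|\alpha|\le k}c_\alpha D_x^\alpha P_1(0,y)\equiv0$ on $\R^n\setminus\overline{B}_1$, you then assert that ``via the duality \eqref{integrationbyparts}'' one can manufacture a tempered distribution $\tilde u$ with $(-\Delta)^s\tilde u=Q$ and invoke Remark \ref{Liouvilleremark} to force $Q\equiv0$. This step is unsubstantiated and I do not see how to make it rigorous: Remark \ref{Liouvilleremark} is a non-existence statement, so to use it you must actually produce such a $\tilde u$, but the hypothesis $K\equiv0$ is a pointwise vanishing of a function defined only on the exterior of the ball and is not obviously the statement that any globally defined tempered distribution solves $(-\Delta)^s\tilde u=Q$. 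The polynomial $Q=\sum c_\alpha x^\alpha$ and the kernel $K$ live on different sides of the duality and no concrete bridge between them is exhibited. To close your version of the argument one would instead have to verify directly that the finitely many functions $\{D_x^\alpha P_1(0,y)\}_{|\alpha|\le k}$ are linearly independent on $\{|y|>1\}$, which is a computation you have not carried out.

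There is also a structural difference worth noting: you freeze the evaluation point at the origin and vary only the exterior datum $\varphi$, whereas the paper lets the base point $x$ vary along with the function $u$. This extra freedom is precisely what lets the paper avoid any kernel computation. Taking (a compactly supported modification of) $u(x)=x_+^s$, which is $s$-harmonic on $(0,\infty)$ by Theorem \ref{t5.1}, and using that $D^i(x_+^s)=s(s-1)\cdots(s-i+1)x^{s-i}$ for $x>0$, the annihilator condition $\sum_i c_iD^iu(x)=0$ becomes, after multiplying by $x^{m-s}$, the polynomial identity $\sum_{i\le m}c_i\,s(s-1)\cdots(s-i+1)\,x^{m-i}\equiv0$ for all small $x>0$, which immediately forces $c=0$ since $s\in(0,1)$ is not an integer. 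I would recommend either adopting the paper's varying-base-point formulation together with the explicit test function $x_+^s$, or else replacing the Liouville heuristic by a direct linear-independence check of the family $D_x^\alpha P_1(0,\cdot)$.
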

\begin{proof}
	We sketch the proof in the one-dimensional case, as in \cite[Section 2.5]{BV16}. For the general proof we refer the reader to \cite[Theorem 1.1]{DSV17}. Notice that it is enough to prove the result for monomials. Indeed, by Stone-Weierstrass Theorem, for any $\varepsilon>0$ and a given $f\in C([0,1])$, there exists a polynomial $P$ such that
	$$
	\|f-P\|_{C^k(\overline{B}_1)}<\varepsilon.
	$$
	Combined with the linearity of the fractional Laplacian, this implies that it is enough to prove the theorem for monomials, i.e., it is enough to show that $P(x)=x^m$, $m\ge1$ can be approximated by an $s$-harmonic function $u_m$. In turn, to prove the latter, it is enough to show that for any $m\in\mathbb{N}$, there exist $R>r>0$, $x\in\R$ and $u$ such that 
	\begin{equation}\label{6.1}
		\begin{cases}
			(-\Delta)^su=0 & \textit{ in }\,\,(x-r,x+r),\\
			u=0 & \textit{ in }\,\,\R\setminus(x-R,x+R),
		\end{cases}
	\end{equation}	
	 and 
	 \begin{equation}\label{6.2}
	 	D^iu(x)=0,\,\,i\in\{0, 1,\ldots,m-1\},\,\,D^mu(x)=1. 
	 \end{equation}
 	Indeed, it implies that, up to a translation, $u(x)=x^m+O(x^{m+1})$ near the origin, hence, its blow-up
 	$$
 	u_\lambda(x):=\frac{u(\lambda x)}{\lambda^m}=x^m+\lambda O(x^{m+1}),
 	$$
 	being an $s$-harmonic function, for $\lambda$ small is arbitrarily close to $x^m$, which, as stated earlier, provides the desired result. Thus, it remains to makes sure there exists a function $u$ satisfying \eqref{6.1} and \eqref{6.2}. To that aim, let $\mathbb{L}$ be the set of all pairs $(u,x)$ satisfying \eqref{6.1}. Define the vector space
	 $$
	 V:=\left\{\left(u(x),Du(x),\ldots,D^mu(x)\right),\,\textit{ for }\,(u,x)\in\mathbb{L}\right\}.
	 $$
	 Directly can be verified that $V$ is a linear spaces. Moreover,
	 \begin{equation}\label{6.3}
	 	V=\R^{m+1}.
	 \end{equation}
 	Assume for a moment, that \eqref{6.3} is verified. As $(0,\ldots\,0,1)\in\R^{m+1}=V$, the pair $(u,x)$ satisfies \eqref{6.1} and \eqref{6.2}. Thus, we are left to prove \eqref{6.3}. We argue by contradiction and assume that \eqref{6.3} fails. Since $V$ is a linear space, it has to be a proper subspace of $\R^{m+1}$ and so it lies in a hyperplane. Consequently, there exists $c=(c_0,c_1,\ldots,c_m)\in\R^{m+1}\setminus\{0\}$ such that
 	$$
 	V\subseteq\left\{\mu\in\R^{m+1};\,\,c\cdot\mu=0\right\}.
 	$$
 	This means that the vector $c$ is orthogonal to any vector in $V$, i.e.,
 	\begin{equation}\label{6.4}
 	\sum_{i\le m}D^iu(x)=0.
 	\end{equation}
 	If $u(x)=x_+^s$, then $D^iu(x)=s(s-1)\ldots(s-i+1)x^{s-i}$, and multiplying with $x^{m-s}$, $x\neq0$, from \eqref{6.4} we get
 	$$
 	\sum_{i\le m}c_is(s-1)\ldots(s-i+1)x^{m-i}=0,
 	$$
 	i.e., $c_i=0$ for each $i$, or equivalently $c=0$, which is a contradiction. This completes the proof. Strictly speaking the function $x^s_+$, being $s$-harmonic, Theorem \ref{t5.1}, does not satisfy \eqref{6.1}, because it does not have a compact support. So to deduce the contradiction, one should assume that $u$ is a fractional harmonic function with compact support, which behaves like $x^s$ near the origin, and apply \eqref{6.4} for $x>0$ small.
\end{proof}

\medskip

\textbf{Acknowledgments.} The author was partially supported by the King Abdullah University of Science and Technology (KAUST) and by the Centre for Mathematics of the University of Coimbra (UIDB/00324/2020, funded by the Portuguese Government through FCT/MCTES).

\end{document}